\def\rectangleone{(0,-2) rectangle (4,12)}
\def\rectangletwo{(1,1) rectangle (3,9)}
\def\rectanglethree{(-1,-1) rectangle (3, 3)}
\def\rectanglefour {(-1,7) rectangle (3, 11)}
\def\rectanglefive{(1,7) rectangle (5,11)}
\numberwithin{equation}{section}
\title{Ergodicity for infinite periodic translation surfaces}
\author{Pascal Hubert}
\address{LATP, case cour A, Facult\'e des sciences de Saint J\'er\^ome, 
Avenue Escadrille Normandie-Niemen, 
13397 Marseille Cedex 20, France \vskip 0.5em
 {\tt hubert@cmi.univ-mrs.fr}}
\author{Barak Weiss}
\address{Ben Gurion University, Be'er Sheva, Israel 84105
{\tt barakw@math.bgu.ac.il}}
\newif\ifdraft\drafttrue
\font\sb = cmbx8 scaled \magstep0
\font\sn = cmssi8 scaled \magstep0
\long\def\combarak#1{\ifdraft{\sb #1 }\else\ignorespaces\fi}
\newcommand\name[1]{\label{#1}{\ifdraft{\sn [#1]}\else\ignorespaces\fi}}
\newcommand\eq[2]{{\ifdraft{\ \tt
[#1]}\else\ignorespaces\fi}\begin{equation}\label{eq: 
#1}{#2}\end{equation}}
\newcommand {\equ}[1]     {\eqref{eq: #1}}
\newcommand{\ov}[1]{\overline{#1}}
\newcommand{\Vis}{\mathrm{Vis}}
\newcommand{\R}{{\mathbb{R}}}
\newcommand{\HH}{{\mathbb{H}}}
\newcommand{\Z}{{\mathbb{Z}}}
\newcommand{\PPP}{{\mathbb{P}}}
\newcommand{\hol}{{\mathrm{hol}}}
\newcommand{\E}{{\mathbf{e}}}
\newcommand{\N}{{\mathbb{N}}}
\newcommand{\cl}{\overline}
\newcommand{\Ad}{{\operatorname{Ad}}}
\newcommand{\GL}{\operatorname{GL}}
\newcommand{\SL}{\operatorname{SL}}
\newcommand{\Sat}{\mathcal{O}}
\newcommand{\df}{{\, \stackrel{\mathrm{def}}{=}\, }}
\newcommand{\til}{\widetilde}
\newcommand{\sm}{\smallsetminus}
\newcommand{\vre}{\varepsilon}
\newcommand{\cc}{{\mathcal{C}}}
\font\sb = cmbx8 scaled \magstep0
\newcommand\Aff{\mathrm{Aff}}
\newcommand{\ca}{{\mathcal A}}
\newcommand {\ignore}[1]  {}
\newtheorem{thm}{Theorem}
\newtheorem{lem}[thm]{Lemma}
\newtheorem{prop}[thm]{Proposition}
\newtheorem{cor}[thm]{Corollary}
\newtheorem{claim}[thm]{Claim}
\newtheorem{Def}[thm]{Definition}
\newtheorem{remark}[thm]{Remark}
\begin{document}

\begin{abstract}
For a $\Z$-cover $\til M \to M$ of a translation surface, which
is a lattice surface, and which admits infinite strips, we
prove that almost every direction for the straightline flow is
ergodic. 
\end{abstract}

\maketitle

{\it This paper is dedicated to Howard Masur whose work is a great source of inspiration for the authors.}
\section{Introduction}
A translation surface is a topological surface equipped with a
geometric structure which makes
it possible to define a straightline flow on the surface in any
direction $\theta$. 
The study of the ergodic properties of straightline flows on compact
translation surfaces is a classical subject which has been studied for
nearly a century (see for instance \cite{KS}).    
The dynamics of the straightline flows is best understood on so-called lattice
surfaces, following celebrated work of Masur \cite{Masur-Duke} and
Veech \cite{Veech - alternative}. See \cite{MaTa, Viana, Zo}
for detailed introductions to translation surfaces, including surveys
of these and more recent developments. 

For non-compact translation surfaces, while several examples of
lattice surfaces have been given (see \cite{Ho,HuWe, HuGabi}), 
there are not many results on the dynamics of the straightline
flows. The only example which is well-understood is the infinite
staircase surface, for which all ergodic invariant Radon measures were
classified \cite{HuWe}.  
The infinite staircase is an example of a lattice surface which arises
as a $\Z$-cover of a compact translation surface, and a general
theory for such surfaces was developed in 
\cite{Zcovers}. Note that for $\Z$-covers, the ergodicity
question can be reduced to a question on the ergodicity of a
$\Z$-valued skew product over a base dynamics. In the infinite
staircase example the base dynamics is an irrational rotation of the
circle; the skew products over rotations are well-understood, and
in fact the results of \cite{HuWe} 
are essentially just a reformulation of prior work on skew products
\cite{ANSS}. However for general $\Z$-covers, one is led to the
study of skew products over interval exchange transformations. These
are poorly understood and the reduction does not essentially simplify
the analysis. Fortunately, the original
arguments of Masur \cite{Masur-Duke} can be adapted to this
situation; using this approach we solve the ergodicity question in
many cases and obtain new results about $\Z$-valued skew products over
interval exchanges. 

We now introduce the terminology needed for stating our results. 
We identify $S^1 = \R / 2\pi\Z$ with the set of
directions. For $\theta \in S^1$, throughout this paper we let
$\E_\theta \in \R^2$ denote the vector $(\cos \theta, \sin \theta)$.

Let $M$ be a compact translation surface,  and $p:\til M \to M$ a
$\Z$-cover of translation surfaces; i.e. $\til M$ is a (noncompact)
translation surface, there is a finite set $P \subset M$ such that $p:
\til M \sm p^{-1}(P) \to M \sm P$ is a covering map which is a
translation in each chart, and there is 
a translation automorphism $S: \til M \to \til M$ commuting with $p$,
such that $M$ is 
isomorphic to $\til M /\langle S \rangle$.




An {\em infinite strip} in $\til M$ is a subset isometric to an
infinite strip in $\R^2$, i.e. isometric to
$\R\times (-a,a)$ for some $a>0$.
As explained in Proposition \ref{prop: having strips} below, an
infinite strip $\Sigma \subset \til M$ projects to a cylinder $C 
\subset M$, and the lift to $\Sigma$ of a core curve $\delta$ of $C$
is not closed, so that its
endpoints are $x, \, S^kx$ for some $k = k(\Sigma) \neq 0$. The
holonomy vector of $\delta$ depends 
only on $\Sigma$ and we denote it by $v(\Sigma)$. Also we denote by
$A(\Sigma)$ the 
area of $C$. We say
that a direction $\theta$ is {\em well-approximated by
strips} if there are $\vre>0, \, k \neq 0$ and infinitely many
strips $\Sigma \subset 
\til M$ for which $k \equiv k(\Sigma)$, the
$A(\Sigma)$ are bounded below by a uniform positive bound, and 
\eq{eq: approximation}{
\left| \E_{\theta} \wedge v(\Sigma)\right| \leq (1-\vre) \,
\frac{A(\Sigma)}{2\|v(\Sigma)\|}.}

We say that $\theta$ is an ergodic direction on $\til M$
if the straightline flow in direction $\theta$ on $\til M$
is ergodic with respect to Lebesgue measure. 
We say that $\theta$ is {\em ergodic
on intermediate finite covers}, if for any $\til M \to M' \to M$, where $M'$
is a finite cover of $M$, $\theta$ is an ergodic direction on $M'$.


\begin{thm}\name{thm: main} 
Let $\til M \to M$ be a $\Z$-cover of translation
surfaces.
Suppose $\theta$ is a direction which is well-approximated by strips
in $\til M$, and ergodic on intermediate finite covers. Then $\theta$
is an ergodic direction on $\til M$.  
\end{thm}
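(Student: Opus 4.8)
The plan is to pass to the $\Z$-cocycle encoding the cover, use the hypothesis on finite covers to reduce the problem to producing a single nonzero essential value, and then extract such a value from the infinite supply of strips by a renormalization argument modeled on Masur's proof of unique ergodicity in \cite{Masur-Duke}. Fix a transversal to the straightline flow on $M$ in direction $\theta$, let $T$ be the resulting interval exchange, and let $\varphi$ be the cocycle over $T$ with values in $\Z\cong\langle S\rangle$ recording $p:\til M\to M$. Since $M$ is itself an intermediate finite cover, the hypothesis makes the flow on $M$, hence $T$, ergodic, and the flow in direction $\theta$ on $\til M$ is ergodic if and only if the skew product $(x,n)\mapsto(Tx,n+\varphi(x))$ is, which --- the base being ergodic --- happens exactly when the group $E(\varphi)\le\Z$ of essential values equals $\Z$. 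Write $E(\varphi)=d\Z$; I claim it is enough to show $d\ge1$. Indeed, if $d\ge2$ then $\varphi$ is cohomologous to a $d\Z$-valued cocycle, so $\varphi\bmod d$ is a coboundary; but $\varphi\bmod d$ is precisely the cocycle of the degree-$d$ intermediate cover $M_d:=\til M/\langle S^d\rangle\to M$, so the flow in direction $\theta$ on the \emph{finite} surface $M_d$ is non-ergodic, contradicting the hypothesis. Hence $d=1$ and the flow on $\til M$ is ergodic. (Exhibiting a nonzero essential value also makes $\varphi$ recurrent, so the equivalence just used is legitimate.)

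For the geometric input, rotate so that $\theta$ is vertical, set $g_t=\diag(e^t,e^{-t})$, and let $\Sigma$ be one of the strips furnished by the hypothesis, with $k(\Sigma)=k_0$ and $A_0\le A(\Sigma)\le\vol(M)$ (the upper bound holds because $A(\Sigma)$ is the area of a cylinder of the compact surface $M$). Writing $\alpha=\alpha(\Sigma)$ for the small angle between $v(\Sigma)$ and $\theta$, so that $|\E_\theta\wedge v(\Sigma)|=\|v(\Sigma)\|\sin\alpha$, and using that $A(\Sigma)/\|v(\Sigma)\|$ is the width of the underlying cylinder, a direct computation with the defining inequality of well-approximation shows that at time $t_\Sigma=-\tfrac12\log|\tan\alpha|$ the image $g_{t_\Sigma}\Sigma$ is a cylinder in $g_{t_\Sigma}\til M$ of width at least $\sqrt{A_0/(1-\vre)}$ and core length at most $\vol(M)\sqrt{(1-\vre)/A_0}$ --- the slack $1-\vre$ being exactly what keeps the width bounded \emph{below}. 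Passing to a subsequence we may assume $t_\Sigma\to\infty$: infinitely many distinct strips force infinitely many distinct cylinders of $M$, hence (since $M$ has only finitely many cylinders of bounded core length) $\|v(\Sigma)\|\to\infty$ and $\alpha(\Sigma)\to0$. Moreover $g_{t_\Sigma}\Sigma$ is a lift of a bounded-geometry cylinder of $g_{t_\Sigma}M$, and going once around it realizes the deck transformation $S^{k_0}$.

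It remains to run the Masur-type argument. Suppose $E(\varphi)$ were trivial; then the flow on $\til M$ admits an invariant set $\til A$ with $\til A$ and $\til M\setminus\til A$ of positive measure which is not $S^{k_0}$-invariant modulo null sets. Since $S$ commutes with the flow and $g_t$ carries direction-$\theta$ orbits to direction-$\theta$ orbits, transporting by $g_{t_\Sigma}$ presents, on $g_{t_\Sigma}\til M$, a direction-$\theta$-invariant set together with a bounded-geometry cylinder lifted from the base, of width bounded below while the flow crosses it. As in \cite{Masur-Duke}, an invariant set cannot oscillate across such a cylinder, and one extracts --- uniformly over the strips in the subsequence, the uniform quantitative control coming from $A_0$ and $\vre$ --- a region of definite measure on which $\til A$ agrees with $S^{k_0}\til A$ up to an error tending to $0$ as $t_\Sigma\to\infty$. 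Globalizing by means of the ergodicity of $T$ (the hypothesis for $M'=M$ once more), which lets one chain these recurrences so as to return to any prescribed positive-measure set with prescribed cocycle sum, yields a nonzero element of $E(\varphi)$ --- a contradiction; hence $E(\varphi)\ne\{0\}$ and, by the first paragraph, $\theta$ is ergodic on $\til M$. I expect the heart of the difficulty to lie exactly here: making Masur's comparison quantitative and \emph{uniform over all strips} satisfying the approximation inequality, so that each one, no matter how long and thin, forces the invariant set --- equivalently, the cocycle --- to register $S^{k_0}$ on a set of measure bounded below independently of the strip, and then carrying out the globalization that promotes infinitely many such local forcings to the single statement $E(\varphi)\ne\{0\}$.
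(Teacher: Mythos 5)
Your overall architecture is the same as the paper's: encode the cover by a $\Z$-valued cocycle, observe via Schmidt's theory (the paper's Proposition \ref{prop: essential values}) that a single nonzero essential value $k$ reduces ergodicity on $\til M$ to ergodicity on the finite intermediate cover $\til M/\langle S^k\rangle$, which is the second hypothesis, and then extract that essential value from the strips. Your first paragraph and the renormalization computation in the second (the width $\sqrt{A_0/(1-\vre)}$ and bounded core length at time $t_\Sigma=-\tfrac12\log|\tan\alpha|$) are correct. But the heart of the theorem --- the proof that $k$ is actually an essential value --- is not carried out: it is an appeal to ``as in Masur, an invariant set cannot oscillate across such a cylinder,'' followed by a globalization step (``chaining these recurrences so as to return to any prescribed positive-measure set with prescribed cocycle sum'') that essentially restates the definition of essential value rather than verifying it. You flag this yourself as ``the heart of the difficulty,'' and it is: without it there is no proof.

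Concretely, two things are missing, and the paper supplies both without any renormalization, working directly on $\til M$. First, one must show that almost every $x$ is, for infinitely many $n$, a corner of an embedded rectangle $R_n$ with sides in directions $\theta,\theta^*$ and \emph{opposite corner at $S^kx$}, such that a concentric dilate $cR_n$ with $c=(1-\vre/2)(1-\vre)^{-1}>1$ still lies inside the lifted cylinder $p^{-1}(C_n)$. This is the paper's Lemma \ref{lem: conull invariant}; it uses the $(1-\vre)$ slack to get the margin $c>1$, the lower bound $A(\Sigma_n)\geq\vre$ to make the set of points within $\eta_n$ of the central core curve have measure $\geq\vre^2/4$ uniformly in $n$, the convergence of the direction of $v(\Sigma_n)$ to $\theta$ (forced by $\|v_n\|\to\infty$) to make this limsup set invariant under the base flow, and ergodicity of $\phi_t$ on $M$ to conclude it is conull. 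Second, one needs the quantitative overlap argument at a Lebesgue density point $x_0$ of $A\cap\ca$: flowing a small square $Q_0$ at $x$ for time $t\approx a_n$ (the long side of $R_n$) lands half of it, continuously and injectively, onto half of $S^kQ_0$, and a $15/16$ density computation forces $\lambda\bigl(A_k\cap\bar\phi_t(A_0)\bigr)>0$ for a positive-measure set of $t$ --- which is the definition of $k$ being an essential value, for every positive-measure $A$, with no contrapositive or globalization needed. Your renormalized picture, by contrast, moves the problem to the varying family of \emph{infinite} surfaces $g_{t_\Sigma}\til M$, where Masur's compactness-based comparison has no obvious analogue (this is precisely the obstruction the authors allude to when they remark that a ``moduli space of recurrent covers'' would be needed); so as written the plan would not go through without being converted back into the direct long-thin-rectangle argument.
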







Using Theorem \ref{thm: main} we provide examples of ergodic
directions on infinite translation surfaces:


\begin{thm}
\name{cor: Veech dichotomy}
Suppose $\til M \to M$ is a $\Z$-cover, such that $\til M$
is a lattice and has an infinite strip. Then almost every direction
on $\til M$ is ergodic; in fact there is a closed set $\Theta$
of directions, of Hausdorff dimension less than 1, such that any
$\theta \notin \Theta$ is ergodic. 
\end{thm}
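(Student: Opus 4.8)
\textbf{Proof proposal for Theorem \ref{cor: Veech dichotomy}.}
The plan is to derive the dichotomy from Theorem \ref{thm: main} by verifying, for all but a small exceptional set of directions, the two hypotheses appearing there: that $\theta$ is well-approximated by strips in $\til M$, and that $\theta$ is ergodic on all intermediate finite covers $\til M \to M' \to M$. Since $\til M$ is assumed to be a lattice surface, the classical Veech dichotomy (Masur--Veech, cf. \cite{Veech - alternative}) applies to each compact surface $M'$ in the tower: in each direction $\theta$, the straightline flow on $M'$ is either periodic (completely decomposed into cylinders) or uniquely ergodic. Moreover the non-uniquely-ergodic directions for $M'$ --- which here are exactly the directions of saddle connections, i.e. the parabolic directions --- form a countable set, hence the union of the bad directions over the (countably many) intermediate covers $M'$ is still countable. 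Thus for all $\theta$ outside a countable set, $\theta$ is ergodic on every intermediate finite cover, and in particular the second hypothesis of Theorem \ref{thm: main} holds.

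Next I would produce the set $\Theta$ and show that outside it every direction is well-approximated by strips. By hypothesis $\til M$ contains an infinite strip $\Sigma_0$; let $C_0 \subset M$ be the cylinder it projects to via Proposition \ref{prop: having strips}, with holonomy $v_0 = v(\Sigma_0)$, cocycle value $k_0 = k(\Sigma_0) \neq 0$, and area $A_0 = A(\Sigma_0)$. Because $\til M$ (equivalently $M$) is a lattice surface, the Veech group $\Gamma = \Gamma(M)$ is a lattice in $\SL_2(\R)$; applying elements $\gamma \in \Gamma$ that fix the cusp corresponding to the direction of $C_0$ we obtain a full $\Gamma$-orbit of cylinders in $M$, each of which lifts to an infinite strip in $\til M$ provided the relevant element of the cover's monodromy behaves compatibly. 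The point is that affine images of $C_0$ give cylinders whose holonomy vectors $\gamma \cdot v_0$ range over $\Gamma v_0$, their areas are all equal to $A_0$ (area is $\SL_2(\R)$-invariant), and --- after possibly passing to a finite-index subgroup of $\Gamma$ so that the monodromy is respected --- the associated cocycle value stays equal to a fixed $k \neq 0$. So one gets infinitely many strips $\Sigma$ with $A(\Sigma) = A_0$ bounded below and $k(\Sigma) = k$, and it remains to see that $\{v(\Sigma)\}$ is ``dense enough'' in directions that \equ{approximation} holds for all $\theta$ outside a small set. Rewriting \equ{approximation}: for a strip with holonomy $v$, the condition $|\E_\theta \wedge v| \le (1-\vre)\frac{A_0}{2\|v\|}$ says that $\theta$ lies within angular distance $\approx (1-\vre)\frac{A_0}{2\|v\|^2}$ of the direction of $v$. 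The set of $\theta$ \emph{not} well-approximated by the family $\{v(\Sigma)\}$ is therefore the set of $\theta$ that, for every $\vre>0$, are $(1-\vre)\frac{A_0}{2\|v\|^2}$-far from $v/\|v\|$ for all but finitely many $v$ in the family --- i.e. a ``badly approximable'' set with respect to the points $v/\|v\|$ and the shrinking windows of size $\sim \|v\|^{-2}$. Taking $\Theta$ to be the union of this badly-approximable set with the countable set of directions bad for some intermediate cover gives the required exceptional set; Theorem \ref{thm: main} then applies to every $\theta \notin \Theta$, yielding ergodicity on $\til M$.

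The remaining task is the Hausdorff-dimension bound, and this is the step I expect to be the main obstacle. The heart of it is a counting/equidistribution statement: because $\Gamma$ is a lattice, the number of $v \in \Gamma v_0$ with $\|v\| \le R$ grows quadratically in $R$ (this is the standard quadratic growth of saddle-connection holonomies on lattice surfaces, originally due to Veech), and moreover these holonomy vectors equidistribute, in the appropriate sense, in the plane. Consequently the angular windows of size $\sim \|v\|^{-2}$ centered at the directions $v/\|v\|$ with $\|v\| \in [R, 2R]$ cover $S^1$ with bounded multiplicity and total measure bounded below, so that the set of $\theta$ missed by all but finitely many windows is a $\limsup$-complement of a family of intervals --- a Cantor-like set. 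A Borel--Cantelli / shrinking-target argument, or rather the complementary Jarn\'ik-type argument for the badly-approximable set, then shows this set has Hausdorff dimension strictly less than $1$; the precise dimension bound will follow from the exponent $2$ in the window size (which is exactly the critical exponent making the total length of the windows of a given scale bounded rather than summable, so the $\limsup$-complement is genuinely smaller than full dimension). One must also handle the bookkeeping of passing to the finite-index subgroup of $\Gamma$ that fixes the cusp and respects the monodromy of the cover, and check that this does not destroy quadratic growth --- it does not, since a finite-index subgroup of a lattice is a lattice. I would carry this out by first fixing the cusp and the finite-index subgroup, then invoking the quadratic-growth-plus-equidistribution of holonomy vectors for lattice surfaces, and finally running the metric Diophantine estimate to get the dimension bound.
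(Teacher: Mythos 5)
Your overall reduction is the same as the paper's: pass to a finite-index subgroup of the Veech group that lifts to $\til M$ and fixes $w$, observe that the $\Gamma$-orbit of one strip produces infinitely many strips with the same $k\neq 0$ and the same area (the paper gets this from the fact that affine automorphisms preserve the intersection pairing, via Proposition \ref{prop: Zcovers correspondence}(iv)), note that $v(\Sigma)$ has a discrete $\Gamma$-orbit, and translate ``well-approximated by strips'' into ``$d$-well-approximable by $\Gamma x$'' in the sense of \equ{eq: approximates}. Your treatment of the intermediate covers (countably many covers, countably many bad directions on each, since all the $M'$ are lattice surfaces) is a legitimate variant of the paper's Corollary \ref{cor: intermediate}, which instead uses Masur's criterion together with non-divergence of $\{g_t[r_{\theta'}]\}$ for all but countably many $\theta$.

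The genuine gap is the Hausdorff-dimension bound, which you correctly identify as the main obstacle but do not actually prove. Your stated reason --- that the exponent $2$ is critical, so the windows at each scale have total length bounded below and hence ``the $\limsup$-complement is genuinely smaller than full dimension'' --- is not a valid inference: a family of intervals can have total length bounded below at every scale, and bounded multiplicity, while still missing a fixed subinterval of $S^1$ entirely, so no dimension bound on the complement follows from these two facts. To run a Cantor-type covering argument you would need a \emph{local ubiquity} statement (in every arc $I$ and at every scale $R$ a definite proportion of $I$ is covered by windows coming from vectors $v\in\Gamma x$ with $\|v\|\asymp R$ and direction in $I$), i.e.\ an effective, all-scales equidistribution of the discrete orbit $\Gamma x$; for a general non-uniform lattice this is a substantive result, not bookkeeping. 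Indeed the paper does not attempt this route: Proposition \ref{prop: a.e. and hd} first converts $d$-well-approximability into the statement that the geodesic $\{g_t[r_{\theta'}]\}$ penetrates infinitely often into a fixed cusp neighborhood $\cc_c$ (Proposition \ref{prop: geodesic}), and then the dimension bound is obtained from the fact that the set of geodesics which fail to do so has Hausdorff dimension strictly less than $3=\dim(G/\Gamma)$ --- a result imported from Kleinbock--Weiss \cite[Prop.~8.5]{margulis} in the one-cusp case and from Einsiedler--Kadyrov--Pohl \cite{EKP} in general. (For $\Gamma=\SL(2,\Z)$ your set is the classical set of numbers with eventually bounded partial quotients, where Jarn\'ik's theorem does apply; but that proof uses the continued-fraction structure, which has no ready analogue for an arbitrary lattice.) The measure-zero part of your argument is fine and matches the paper's, which derives it from ergodicity of the geodesic flow rather than a divergence Borel--Cantelli lemma.
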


\begin{remark}
\begin{enumerate}
\item
As we show at the end of the paper, there are recurrent $\Z$-covers
which do not admit infinite strips, 
and for these our methods fail. However as we show in Proposition
\ref{prop: observation strips}, these are quite rare. 
\item
The set $\Theta$ admits an explicit description in terms of the behavior 
of geodesic trajectories in
$G/\Gamma_0$, where $\Gamma_0$ is the Veech group of $\til M$. 
Namely it is the set of geodesic trajectories which do not venture
sufficiently far into the cusp 
corresponding to a cylinder in $M$ which lifts to an infinite strip in
$\til M$. 
\item
The deduction of Theorem \ref{cor: Veech dichotomy} from Theorem
\ref{thm: main} is inspired by ideas of Masur \cite{Masur-Duke}. In
his proof of unique ergodicity for translation surfaces, to prove the
existence on $M$ of long and thin rectangles in direction $\theta$,
Masur uses that a certain geodesic trajectory is not divergent {\em in the moduli space
of compact translation surfaces}. Our approach to verifying
that a.e. $\theta$ is well-approximable by cylinders is similar. To find
long and thin rectangle on $\til M$ in direction $\theta$, we study
the limiting points of the corresponding trajectory 
{\em within the $G$-orbit of $\til M$.} This hints that a {\em moduli
space of recurrent covers} might be very helpful for generalizing our
results.

\item
In the special case in which $\til M$ is the infinite staircase
surface, the ergodicity of the straightline flow in every irrational
direction follows from results of Conze \cite{Co} on cylinder flows over
irrational rotations. One can show that in this case the set $\Theta$
consists only of rational directions, thus Theorem \ref{cor: Veech
dichotomy} yields 
another proof of Conze's theorem.

\item
For compact translation surfaces, almost every direction is uniquely
ergodic. For infinite translation surfaces, one does not usually
expect unique ergodicity. In the infinite staircase \cite{HuWe} there
are uncountably many invariant measures, indexed by a positive real
`deformation' parameter. Hooper \cite{Ho-classification} exhibits some
uniquely ergodic cases but these are not typical. 

\item K. Fraczek and C. Ulcigrai \cite{Franczek-Ulcigrai} provide examples
  of recurrent $\Z$-covers $\til M \to M$ which do admit a strip,
  where $\til M$ is {\em not} a lattice surface, and for which the
  straightline flow is {\em not} ergodic in almost every direction. In
their examples $M$ is a square-tiled surface of genus two, and the
Veech group of $\til M$ is infinitely generated of the first kind. 
It
follows from Theorem \ref{thm: main} that in these examples, the set of
ergodic directions is a dense $G_\delta$, see Corollary \ref{cor: FrUl}.  

\end{enumerate}
\end{remark}
\subsection{Applications}
It is of interest to construct dense trajectories for vector fields on
noncompact surfaces. For example Panov \cite{Pa} constructed a dense
trajectory for 
a vector field on the plane, which is a pullback under a covering map,
of a constant slope field on a torus endowed with a quadratic 
differential. Since ergodic flows with respect to a measure of full
support have dense trajectories, our theorem gives many new examples
of infinite translation surfaces for which the straightline flow has dense
trajectories.

The question of ergodicity of a $\Z$-valued skew product over $f: X
\to X$ is
only well-understood for a limited class of $X$. For example the case
in which $X = S^1$ is the circle and $f$ is an irrational rotation is
well-understood (see \cite{ANSS} and the references therein). This is
equivalent to the case in which $X$ is an interval and $f$ is an
interval exchange on 2 intervals. On the other hand the question for interval
exchange transformations on an arbitrary number of intervals is
challenging and poorly
understood. There are not many examples of $\Z$-valued
skew products over interval exchanges which are known to be
ergodic. Our Theorem \ref{cor: Veech dichotomy} is a source of many
new examples. Suppose $\til M \to M$ is a recurrent $\Z$-cover which is
a lattice surface containing an infinite strip, and $\theta$ is an
ergodic direction on $M$. Let $I$ be a segment in $M$ transverse to
direction $\theta$ and $\til I$ a segment which is a preimage of $I$
in $\til M$. If $f: I \to I$ (resp. $\til f: \til I \to \til I$) is
the return map to $I$ (resp. $\til I$) along lines in direction
$\theta$ in $M$ (resp. $\til M$) then $\til f$ is well-defined 
in light of Proposition \ref{prop: Zcovers correspondence}(ii), 
and is a $\Z$-valued skew product over
$f$. If $\til M \to M$ satisfies the
conditions of Theorem \ref{thm: main} then $\til f$ is ergodic. 
An alternative approach to the ergodicity question for
$\Z$-valued skew products over {\em periodic} interval exchanges
(i.e. those for which the geodesic flow is periodic), under an 
assumption on the gap in the Lyapunov spectrum, is developed in
the recent work \cite{Conze-new}.

A well-known result of Kesten \cite{Kesten} implies that on the
infinite staircase surface, there are no bounded 
trajectories (i.e. trajectories for the straightline flow which in all
positive times are confined between two levels of the staircase). By
an argument of \cite{Peterson} which we recall 
in Proposition \ref{prop: Kesten}, Theorem \ref{thm: main}
provides more examples of recurrent $\Z$-covers without bounded
trajectories. Note that Ralston \cite{Ralston} has constructed
trajectories on the infinite staircase which are bounded above. \\



\section{background}

In this section, we briefly introduce our notation and state the results we will need.


\subsection{Translation surfaces}

A surface is called a \emph{translation surface} if it can be obtained
by edge-to-edge gluing of polygons in the plane, only using translations
(the polygons need not be compact or finitely many but should be
at most countably many, and the lengths of sides and diagonals should
be bounded away from zero). Two translation surfaces are considered
equivalent if the corresponding decompositions into polygons have a
common locally finite refinement. 
The translation structure induces a flat
metric with conical singularities; in the non-compact case, `infinite
angle singularities' may arise. The translation structure also induces
horizontal and vertical transverse measures $dx, dy$, and a Lebesgue
measure $dxdy$.

Let $M$ be a compact translation surface. Sometimes we add finitely
many marked
points to the set of singularities, i.e. distinguished points at which
the cone angle is $2\pi$ which we consider as singularities. To avoid
uninteresting complications we always assume that the set of
singularities is nonempty.    
A \emph{saddle connection} is a geodesic segment for the flat metric
starting and ending at a singularity, and containing no singularity in
its interior.
A \emph{cylinder} on a translation surface is a maximal connected union
of homotopic simple closed geodesics. We call the length of such a
closed geodesic the {\em circumference} of the cylinder, and the length of a
perpendicular segment going across the cylinder, its {\em height}. 
Let $M$ be a compact translation surface, let $P \subset M$ be a
finite set of points containing the singularities, and let $\delta$ be
a curve on $M$ which is either closed or connects points of $P$. 
We denote by 
$\hol(\delta)$ the holonomy of the translation structure on $M$ along
$\delta$; i.e. 
the vector in $\R^2$ obtained by integrating $dx$ and $dy$ elements
along $\delta$. 
The holonomy map is well defined on $H_{1}(M, P; \Z)$. 

Given any translation surface $M$, an \emph{affine
diffeomorphism} is an orientation preserving homeomorphism of $M$ that
permutes the singularities of the flat metric and acts affinely on the
polygons defining $M$.  The group of affine diffeomorphisms is denoted
by $\Aff(M)$.  The image of the derivation map
$$d : 
   \left\{
   \begin{matrix}
    \Aff(M) \to \GL(2,\R) \\
    f \mapsto df
    \end{matrix}
   \right.
$$
is called the \emph{Veech group}, and denoted by $\Gamma(M)$.  If
$M$ is a compact translation surface, then $\Gamma(M)$
is a discrete subgroup of $G = \SL(2,\R)$.  

A translation surface is a \emph{lattice surface} if its Veech group
is a lattice in $G$. Note that lattice surfaces are sometimes referred
to as \emph{Veech surfaces}. 
%
%
%
%
We say that two subgroups
$\Gamma_1, \Gamma_2$ of a group $G$ are {\em commensurable} if they
share a common finite index subgroup. Let 
$\overline{\phi}_{t}^{\theta}$ denote the straightline flow in 
direction $\theta$ on $\til M_{w}$ and by
$\phi_{t}^{\theta}$ the flow on $M$.  
A straightline flow on a translation surface is {\em
uniquely ergodic}  if the only invariant probability measure is
Lebesgue measure.   
If the linear flow $\phi_{t}^{\theta}$ is uniquely ergodic, we say that
the direction $\theta$ is uniquely ergodic.

Let $G = \SL(2,\R)$, and let 
\eq{defn flows}{
 g_t = \left(\begin{array}{cc}
e^{t} & 0 \\ 0 & e^{-t} \end{array} \right), \ \ \ 
r_{\theta} =
\left(\begin{array}{cc}
\cos \theta & -\sin \theta \\
\sin \theta & \cos \theta
\end{array}
\right).  
}
There is a moduli space or {\em stratum} of translation surfaces of a fixed genus and
singularity pattern, and this space is equipped with a $G$-action. 
The flow induced by the action of $\{g_t\}$ is called the geodesic
flow. For any flow on a topological space, 
we say that a trajectory $\{g_{t}x\}$ is divergent 
if for any compact subset $K$ of the space there is $t_0$ such that
$g_t x \notin K$ for all $t>t_0$. 
Masur's well-known criterion provides a link between the dynamics of
the straightline flow on a surface and the dynamics of the geodesic
flow on its stratum of translation surfaces. Throughout this paper we set $\theta' = \pi/2-\theta.$

\begin{prop}[{Masur condition \cite{Masur-Duke}}]\name{prop: Masur
condition}
If $M$ is a compact translation surface and $\{g_t r_{\theta'} M \}$
is not divergent in the moduli space of
translation surfaces, then $\theta$ is a uniquely ergodic direction on
$M$. In particular, denoting by $\Gamma$ the Veech group of $M$ and by
$x \mapsto [x]$ the natural map $G \to G/\Gamma$, if $\{g_t [r_{\theta'}] \}$
is not divergent in $G/\Gamma$, then $\theta$ is a uniquely ergodic direction on
$M$.

\end{prop}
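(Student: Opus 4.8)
The plan is to derive Masur's unique ergodicity criterion from known structural facts about how divergence in the stratum relates to the presence of short saddle connections, together with the fact that a surface with no arbitrarily short saddle connections has compact orbit closure restricted to the relevant directions. First I would recall the characterization of divergence: a trajectory $\{g_t r_{\theta'} M\}$ in the stratum leaves every compact set precisely when the systole (length of the shortest saddle connection) of $g_t r_{\theta'} M$ tends to zero as $t \to \infty$. So the hypothesis that the trajectory is \emph{not} divergent means there is a constant $\rho > 0$ and a sequence $t_n \to \infty$ with systole$(g_{t_n} r_{\theta'} M) \geq \rho$. The content of the proposition is then Masur's theorem \cite{Masur-Duke}: if along the geodesic in direction $\theta'$ the surface does not degenerate, then the vertical flow (equivalently, the straightline flow in direction $\theta$, after the rotation normalization $\theta' = \pi/2 - \theta$) is uniquely ergodic.

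The key steps, following Masur's original argument, are as follows. Suppose for contradiction that $\theta$ is not uniquely ergodic, so there is an ergodic invariant probability measure $\nu \neq \Leb$. Using the non-divergence along $t_n$, extract a convergent subsequence $g_{t_n} r_{\theta'} M \to M_\infty$ in the stratum, and carry along the sequence of measures; after normalization one obtains an invariant measure on $M_\infty$ for its vertical flow, distinct from Lebesgue. The heart of the proof is a compactness/distortion argument: on each $g_{t_n} r_{\theta'} M$ one builds long thin horizontal rectangles (of width bounded below, in terms of $\rho$, and of height going to zero), pulls them back under $g_{t_n} r_{\theta'}$ to the original surface $M$, where they become long thin rectangles in direction $\theta$; the ergodic averages of the two measures $\nu$ and $\Leb$ over such rectangles must converge to the same value because both measures are nearly uniform on rectangles that are sufficiently long and thin — this is where non-degeneracy is essential, as it guarantees the rectangles genuinely stretch across a definite portion of the surface rather than collapsing into a neighborhood of a singularity. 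This forces $\nu = \Leb$, a contradiction. The final sentence of the proposition is then immediate: the stratum-trajectory $\{g_t r_{\theta'} M\}$ is the image of $\{g_t [r_{\theta'}]\}$ in $G/\Gamma$ under the orbit map $[x] \mapsto x \cdot M$, which is proper onto the $G$-orbit closure, so non-divergence of the $G/\Gamma$-trajectory implies non-divergence in the stratum.

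I expect the main obstacle to be the quantitative rectangle-construction and the uniform estimate showing that any invariant probability measure assigns to a long thin rectangle approximately its Lebesgue proportion. This requires controlling, uniformly along $t_n$, how the horizontal foliation decomposes the surface — one needs that a definite fraction of the area is swept out by rectangles whose combinatorics (the return pattern to a fixed transversal) are under control, and this is exactly what fails when the systole degenerates. Making the "approximately uniform" statement precise typically goes through a Fubini-type argument: an $F$-invariant measure disintegrates along the flow lines, and on a transversal the conditional measures must be (nearly) invariant under the return map to rectangles, forcing near-uniformity; carrying this out with explicit error terms depending only on $\rho$ and the aspect ratio of the rectangles is the technical crux. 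Since this is Masur's theorem quoted verbatim, I would in practice simply cite \cite{Masur-Duke} for the first assertion and supply only the short deduction of the $G/\Gamma$ formulation from it, but the sketch above indicates the route one would take to reprove it from scratch.
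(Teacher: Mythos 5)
Your proposal is correct and matches the paper's treatment: the paper offers no proof of this proposition, quoting it directly from Masur \cite{Masur-Duke}, and your stated intention to cite that reference for the first assertion and only supply the deduction of the $G/\Gamma$ formulation is exactly what is (implicitly) done here; your sketch of Masur's rectangle argument is also a fair summary of the original proof. One small correction: the passage from non-divergence in $G/\Gamma$ to non-divergence in the stratum needs only continuity of the orbit map $G/\Gamma \to \mathrm{stratum}$ (a trajectory returning infinitely often to a compact set $K$ has image returning to the compact image of $K$), not properness onto the orbit closure, which can actually fail when the $G$-orbit is not closed.
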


%
%
%


\subsection{Fuchsian groups, geodesic flow, and approximation by cusps}\name{subsection: cusps}
Let $\Gamma$ be a Fuchsian group, i.e. a discrete subgroup of $G$. In this
subsection we recall some facts we will need --- see \cite{Ka} for an
introduction to Fuchsian groups. 

 We denote the upper half-plane by $\HH$, so
that $G$ acts on $\HH$ by M\"obius transformations. For us it
will be convenient to use the {\em right-action}
\eq{eq: convenient}{
z.g = \frac{dz-b}{-cz+a}, \ \ \mathrm{where} \ g =
\left(\begin{matrix} a & b \\ c & d \end{matrix} \right) \in G.
} 
A horoball in $\HH$ is the isometric image of 
$$\HH^+_c=\{z \in \HH: \Im z >c\}$$
for some $c>0$, where $\Im z$ denotes the imaginary part of $z$. A
parabolic element of $G$ is any matrix of trace 2 
other than the identity. If
nontrivial, the stabilizer of a horoball in $\Gamma$ is a maximal
unipotent subgroup, i.e. an infinite cyclic group generated by a
parabolic element which is maximal with respect to inclusion. 
A cusp in $\HH/\Gamma$ is the image of the map 
\eq{eq: horoball}{
B/P \to \HH/\Gamma,}
where $B$ is a horoball, $P$ is the stabilizer
of $B$ in $\Gamma$ and is nontrivial, and $B$ is maximal with respect to the
property that the map \equ{eq: horoball} is
injective. Any maximal parabolic subgroup $P \subset \Gamma$
stabilizes a cusp, which is a proper subset of $G/\Gamma$ when
$\Gamma$ is non-elementary. 
$\Gamma$ is called a non-uniform lattice if $G/\Gamma$ is not compact
but has finite measure. In this case $\HH/\Gamma$ has a finite nonzero
number
of cusps, and their complement is a compact subset of
$\HH/\Gamma$. The map sending a cusp in $\HH/\Gamma$ to the conjugacy 
class of the group $P$ as in \equ{eq: horoball} is well-defined and
induces a bijection between cusps and conjugacy classes of maximal
parabolic subgroups. 

 We have a map $G/\Gamma \to
\HH/\Gamma$ which maps $[g]$ to the image in $\HH/\Gamma$ of
$\mathbf{i}.g$, where $\mathbf{i} = \sqrt{-1}.$ In case $\Gamma$ is
torsion-free, the quotient $\HH/\Gamma$ is a noncompact finite-volume
hyperbolic manifold, $G/\Gamma$ can be identified with its unit
tangent bundle, and the above projection is the projection mapping a
unit tangent vector to its basepoint. 
The group $G$ acts on $G/\Gamma$ via left translations $g_1[g_2] =
[g_1g_2]$, and the action of the group $\{g_t\}$ gives rise to the
geodesic flow on the unit tangent bundle. Let $\cc$
be a cusp in $\HH/\Gamma$, which is the image of a horoball under a
map \equ{eq: horoball}, where $B$ is the isometric image of $\HH_{c_0}^+$
for some $c_0>0$. For any $c>c_0$ let $\cc_c$ be
the image of $\HH_{c}^+$ under the same maps. That is, the sets $\{ \cc
\sm \cc_c: c>c_0\}$, give an exhaustion of $\cc$ by
bounded sets. We say that a geodesic
orbit $\{g_tx\}$ {\em penetrates infinitely often into $\cc_c$} if
there is $t_n \to \infty$ such that $g_{t_n}x \in \cc_c$
for all $n$.

Another action of $G$ is its linear action on the punctured plane
$\R^2 \sm \{0\}$. Under
this action, a point $x$ has a discrete orbit under a non-uniform
lattice $\Gamma$ if and only if its
stabilizer contains a parabolic element. For any $x$, the set
of directions of vectors in the orbit $\Gamma x$ is dense in $S^1$. To
quantify such a density statement, 
for a discrete orbit $\Gamma x$,
$\theta \in S^1$ and $d>0$, we say that $\theta$ is {\em
$d$-well-approximable by $\Gamma x$} if there are infinitely
many $\gamma \in \Gamma$ for which  
\eq{eq: approximates}{
\|\gamma x\| \, |\E_\theta \wedge \gamma x| < d.
}
Here $\|\cdot \|$ denotes the Euclidean norm on $\R^2$ and $|u \wedge
v| = \|u\|\|v\||\sin \theta|$ where $\theta$ is the angle between the
directions of $u$ and $v$. 

\begin{prop}\name{prop: geodesic}
Let $\Gamma$ be a non-uniform lattice, let $x \in \R^2 \sm \{0\}$ such
that $\Gamma x$ is discrete, and let $\cc$ denote the cusp in
$G/\Gamma$ corresponding to the stabilizer of $x$.   
For any $c$ there is $d$ such that if $\theta \in S^1$ is
$d$-well-approximable by $\Gamma x$ then $\{g_t[r_{\theta'}]\}$ penetrates infinitely
often into $\cc_c$. Conversely, for any $d$ there is $c$ such that if 
 $\{g_t[r_{\theta'}]\}$ penetrates infinitely
often into $\cc_c$ then  $\theta$ is
$d$-well-approximable by $\Gamma x$.

\end{prop}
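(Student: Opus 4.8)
The plan is to reduce the proposition to the standard correspondence (in the spirit of Dani's work) between excursions of the geodesic flow into a cusp and the Diophantine approximation of a direction by an orbit of the lattice acting linearly on $\R^2$, and then to run an elementary optimization in $t$. The first step is to match the cusp neighbourhoods $\cc_c$ with a height function on the orbit $\Gamma x$. Since $\Gamma x$ is discrete, $\mathrm{Stab}_\Gamma(x)$ contains a parabolic element, which under the right action \equ{eq: convenient} fixes the point of $\partial\HH$ corresponding to the direction of $x$; hence the cusp $\cc$ is based at that point. A direct computation with \equ{eq: convenient} shows that the height of $\mathbf{i}.g$ with respect to a horoball based at the point of $\partial\HH$ corresponding to the direction of a vector $w \in \R^2 \nz$ equals $\|w\|^{2}/\|gw\|^{2}$. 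Since $\cc_c$ is the image in $\HH/\Gamma$ of a horoball based at the direction of $x$ whose Euclidean height grows to $\infty$ with $c$, and since the image of $\mathbf{i}.g$ lies in $\cc_c$ precisely when some $\Gamma$-translate of $\mathbf{i}.g$ enters that horoball, one obtains a decreasing function $c \mapsto \delta(c)$, with $\delta(c) \to 0$ as $c \to \infty$, such that $[g] \in \cc_c \iff \min_{w \in \Gamma x}\|gw\| < \delta(c)$ for every $g \in G$.

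Next I would compute this quantity along the geodesic. As $\theta' = \pi/2 - \theta$, for $w \in \R^2$ one has $r_{\theta'} w = \big(-\,\E_\theta \wedge w,\ \E_\theta \cdot w\big)$, where $\E_\theta \cdot w$ denotes the Euclidean inner product, and $(\E_\theta \wedge w)^{2} + (\E_\theta \cdot w)^{2} = \|w\|^{2}$; therefore
\[
\|g_t r_{\theta'} w\|^{2} = e^{2t}(\E_\theta \wedge w)^{2} + e^{-2t}(\E_\theta \cdot w)^{2}.
\]
For a fixed $w$ with $\E_\theta \wedge w \neq 0$, the right-hand side is minimized at the time $t_w$ at which the two summands are equal, where it equals $2\,|\E_\theta \wedge w|\,|\E_\theta \cdot w| \le 2\,\|w\|\,|\E_\theta \wedge w|$; moreover $t_w \to \infty$ whenever $w$ ranges over a sequence with $\|w\| \to \infty$ and $|\E_\theta \wedge w|$ bounded.

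We may assume $\E_\theta$ is not parallel to any $w \in \Gamma x$, disposing of that case at the end. For the first implication: given $c$, put $d = \delta(c)^{2}/2$; if $\theta$ is $d$-well-approximable by $\Gamma x$ there are vectors $w \in \Gamma x$ of arbitrarily large norm with $\|w\|\,|\E_\theta \wedge w| < d$, and at the corresponding times $t_w \to \infty$ we get $\|g_{t_w} r_{\theta'} w\| < \delta(c)$, so $[g_{t_w} r_{\theta'}] \in \cc_c$ for times tending to infinity. For the converse: given $d$, pick $c$ with $\sqrt 2\,\delta(c)^{2} < d$; if $g_{t_n} r_{\theta'} \Gamma \in \cc_c$ with $t_n \to \infty$, choose $w_n \in \Gamma x$ with $\|g_{t_n} r_{\theta'} w_n\| < \delta(c)$ and bound each summand separately to obtain $|\E_\theta \wedge w_n| < \delta(c)\, e^{-t_n}$ and $\|w_n\| < \sqrt 2\,\delta(c)\, e^{t_n}$, whence $\|w_n\|\,|\E_\theta \wedge w_n| < \sqrt 2\,\delta(c)^{2} < d$; since $\E_\theta \wedge w_n \neq 0$ and $t_n \to \infty$ the $w_n$ are eventually distinct, so writing $w_n = \gamma_n x$ yields infinitely many $\gamma_n$ satisfying \equ{eq: approximates}.

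I expect the first step to be where the real work lies: to see that $\cc_c$ is the sublevel set $\{\min_{w \in \Gamma x}\|gw\| < \delta(c)\}$ one must check that the relevant horoball is the one based at the direction of $x$, and that a short vector in $\Gamma x$ can arise only from a $\Gamma$-translate of the line $\R x$ — and all such lines correspond to the one cusp $\cc$ — so that the minimum genuinely measures depth in $\cc$. The remaining estimates are routine. Finally, in the excluded case where $\E_\theta$ is parallel to some $w \in \Gamma x$, one has $\|g_t r_{\theta'} w\| = e^{-t}\|w\| \to 0$, so the geodesic is asymptotic into $\cc$ and lies in $\cc_c$ for all large $t$, while $\theta$ is then $d$-well-approximable by $\Gamma x$ for every $d > 0$ (as $\mathrm{Stab}_\Gamma(x)$ is infinite); so the proposition holds in this case as well.
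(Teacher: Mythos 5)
Your proof is correct and is essentially the same computation as the paper's: your minimization of $e^{2t}(\E_\theta\wedge w)^2+e^{-2t}(\E_\theta\cdot w)^2$ over $t$ is exactly the paper's choice $t_n=\log\|\gamma_n x\|-\log\sqrt{d}$, and your height formula $\|w\|^2/\|gw\|^2$ is its identity $\Im(\mathbf{i}.g)=1/(a^2+c^2)$ after normalizing $x=\E$. The differences are organizational: you set up the cusp/orbit dictionary $[g]\in\cc_c\iff\min_{w\in\Gamma x}\|gw\|<\delta(c)$ once and derive both implications (plus the degenerate case of $\E_\theta$ parallel to an orbit vector) from it, whereas the paper normalizes first, proves only the forward implication, and leaves the converse to the reader.
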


\begin{proof}
This is a standard computation (see e.g. \cite{Patterson}). To keep
the paper self-contained, we prove the implication $\implies$, leaving
the converse to the reader. 

Let $g_0 \in G$ such that $g_0x = \E$, where $\E = (1,0)$ is the first
standard basis vector. Replacing $\Gamma$ with $g_0\Gamma g_0^{-1}$, we
may assume that $x = \E$, so that the cusp corresponding to $x$ is
$\pi(\HH^+_{c_0})$ for some $c_0$. Given $c>0$, we may assume without
loss of generality that $c>c_0$, and we let $d = 1/2c.$ 
Suppose that for some sequence $\gamma_n \in
\Gamma$, \equ{eq: approximates} holds for all $\gamma=\gamma_n$. 
Let $T_n, \theta_n$ denote respectively the length and angle of $\gamma_nx$,
i.e. $\gamma_nx= T_n\E_{\theta_n}$, and write $\displaystyle{
\gamma_n = \left(\begin{matrix}
a_n & b_n \\ c_n & d_n
\end{matrix}
\right), 
}$
so that
\[
\gamma_n x = \gamma_n \E =
\left(\begin{matrix} a_n \\ c_n
\end{matrix}
 \right) = T_n \E_{\theta_n}
.
\]
Then \equ{eq:
  approximates} is equivalent to
\eq{eq: we get}{
T_n^2\, |\sin(\theta-\theta_n)|<d.
}
It follows easily from \equ{eq: convenient} that for any $g \in G$,
$\Im(\mathbf{i}.g) = \frac{1}{a^2+c^2}$. 
Let $t_n = \log T_n - \log \sqrt{d}$. If we write 
$$
\left(\begin{matrix} 
\bar{a}_n & \bar{b}_n \\ \bar{c}_n & \bar{d}_n
\end{matrix}\right) = g_{t_n}r_{\theta'} \gamma_n,
\mathrm{ with } \  \theta' = \pi/2
 $$
 $$
 \mathrm{then}
\ \left( \begin{matrix}\bar{a}_n \\ \bar{c}_n \end{matrix}\right) = 
g_{t_n} r_{\theta'} T_n \E_{\theta_n} = 
\left(\begin{matrix}
T_n^2\sin(\theta-\theta_n)/\sqrt{d} \\ \sqrt{d} \cos(\theta - \theta_n)
\end{matrix} \right).
$$
This implies via \equ{eq: we get} that
$$
\Im(\mathbf{i}. (g_{t_n}r_{\theta'}\gamma_n)) =
\frac{1}{
\bar{a}_n^2+\bar{c}_n^2 }
> \frac{1}{2d} =c
. 
$$ 
This shows that for all $n$, $g_{t_n}r_{\theta'} \gamma_n \in \HH^+_c$, as 
required.  
\end{proof}

\begin{prop}\name{prop: a.e. and hd}
For any non-uniform lattice $\Gamma$, any $d>0$, and any $x \in \R^2
\sm \{0\}$ for which $\Gamma x$ is discrete, the set 
$$\Theta_d=\left\{\theta \in S^1: \theta \mathrm{\ is \ not \
}d\mathrm{-well-approximable \ by \ } \Gamma x\right \}$$
has zero Lebesgue measure, and moreover its Hausdorff dimension is less
than 1. 
\end{prop}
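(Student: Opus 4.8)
The plan is to translate the Diophantine condition ``$\theta$ is not $d$-well-approximable by $\Gamma x$'' into a dynamical statement on $G/\Gamma$ via Proposition \ref{prop: geodesic}, and then invoke the logarithm law / shrinking-target theory for the geodesic flow on finite-volume hyperbolic surfaces. First I would use Proposition \ref{prop: geodesic}: given $d$, there is $c$ such that if $\theta \notin \Theta_d$, i.e.\ if $\theta$ \emph{is not} $d$-well-approximable, then the geodesic ray $\{g_t[r_{\theta'}]\}$ fails to penetrate infinitely often into the cusp neighborhood $\cc_c$; equivalently it is eventually trapped in the complement $(G/\Gamma)\sm \cc_c$, which is a set of the form ``avoid a fixed cusp neighborhood''. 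Up to the diffeomorphism $\theta \mapsto \theta'= \pi/2-\theta$ and the identification of $S^1$ with the boundary circle of $\HH$ via $g \mapsto \mathbf{i}.g$ (which is a smooth map with nonvanishing derivative, hence preserves both Lebesgue-nullity and Hausdorff dimension), $\Theta_d$ is contained in the set of boundary points $\xi$ whose geodesic ray into the cusp $\cc$ is bounded, i.e.\ stays in a fixed compact part of $\HH/\Gamma$ for all large time.

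The key input is then the classical fact that, for a non-uniform lattice $\Gamma$, the set of directions $\xi \in \partial\HH = S^1$ for which the forward geodesic ray from a fixed basepoint is bounded (equivalently, does not enter a given cusp neighborhood infinitely often) has Hausdorff dimension strictly less than $1$, and in particular Lebesgue measure zero. The Lebesgue-nullity is elementary: it follows from ergodicity of the geodesic flow together with the fact that $\cc_c$ has positive measure (so a.e.\ orbit is dense and in particular enters $\cc_c$ infinitely often), or alternatively from the Borel--Cantelli argument underlying the logarithm law. The dimension bound is the substantive point; it is due to work on ``bounded geodesics'' on hyperbolic manifolds --- one can cite the classical results in this circle (e.g.\ the fact that badly approximable points / bounded-orbit sets are $\Gamma$-invariant, of measure zero, and meet every interval, combined with the standard argument that such a nonempty $\Gamma$-invariant set avoiding a cusp neighborhood carries full dimension on its complement but not on itself). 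Concretely, I would reduce to the model cusp at $\infty$ for $\SL_2(\Z)$-like groups, where ``ray from $\xi$ does not enter $\{\Im z > c\}$ infinitely often'' becomes ``$\xi$ is badly approximable'' (the continued-fraction partial quotients of $\xi$ are bounded), and cite Jarník's theorem that the badly approximable numbers have Hausdorff dimension $1$ but measure zero --- wait, that gives dimension $1$, so for the strict inequality one instead uses the sharper statement that avoiding a \emph{fixed} neighborhood $\cc_c$ (rather than all of them) forces a strictly positive lower bound on infinitely many partial quotients uniformly, which does drop the dimension below $1$; this is the content of the Schmidt/Kleinbock--Margulis type results on ``escape into the cusp'' and their refinements for nonuniform lattices.

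The main obstacle is exactly this last point: getting the \emph{strict} inequality $\dim_H \Theta_d < 1$ rather than merely Lebesgue-nullity. For nullity one only needs ergodicity of $\{g_t\}$ plus $\mathrm{vol}(\cc_c) > 0$, which is immediate. For the dimension bound one must quote (or reprove) the fact that for a non-uniform lattice the set of geodesic rays eventually omitting a fixed horoball neighborhood of a cusp has dimension $< 1$; I would handle this by citing the relevant literature on Hausdorff dimension of bounded geodesics in finite-volume hyperbolic manifolds (this is standard and the authors have in mind a specific reference), and noting that $\Theta_d$ is contained in such a set by Proposition \ref{prop: geodesic}. The remaining steps --- verifying that the change of variables $\theta \mapsto \xi$ and $\theta \mapsto \theta'$ are bi-Lipschitz on $S^1$ away from a single point and hence preserve Hausdorff dimension and Lebesgue nullity, and absorbing the single exceptional direction into a null, zero-dimensional set --- are routine.
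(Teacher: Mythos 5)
Your proposal is correct and follows essentially the same route as the paper: translate non-approximability into eventual avoidance of a fixed cusp neighborhood $\cc_c$ via Proposition \ref{prop: geodesic}, obtain Lebesgue nullity from ergodicity of the geodesic flow together with the positive measure of $\cc_c$ (transferred down to the circle of directions by the local product structure, which the paper implements explicitly with the chart $(s,t,\theta)\mapsto g_t h_s^- r_{\theta'}$), and quote the dimension-drop result for orbits avoiding a \emph{fixed} cusp neighborhood --- the paper cites precisely Kleinbock--Weiss and Einsiedler--Kadyrov--Pohl for this, matching your self-corrected observation that Jarn\'ik-type full-dimension results for badly approximable points are not the relevant input. (One typo: you write ``$\theta\notin\Theta_d$'' where you mean $\theta\in\Theta_d$; your gloss and subsequent use are consistent with the correct reading.)
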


\begin{proof}
We first prove that $\Theta_d$ has zero Lebesgue measure. The geodesic
flow on $G/\Gamma$ is ergodic, and for any $c>0$, the cusp $\cc_c$ has
positive measure, with respect to the $G$-invariant probability
measure $\mu$ on $G/\Gamma$ induced by Haar measure on $G$. This implies that for any $c>0$, 
$$\Omega_c = \{x \in G/\Gamma : \{g_tx\} \mathrm{\ does \ not
  \ penetrate \ into \ }\cc_c \mathrm{\ infinitely \ often} \}  
$$
has measure zero. 
On the other hand, by Proposition \ref{prop: geodesic}, for some $c>0$, and some cusp
$\cc$ in $G/\Gamma$, $\Omega_c$ 
contains $\{[r_{\theta'}]: \theta\in \Theta_d\}.$ Let 
$\displaystyle{h_s^-  = \left(\begin{matrix} 1 & 0 \\ s & 1 \end{matrix} \right)}.$ 
Then $g_t h_s^- g_{-t} \to_{t \to \infty} \mathrm{Id}$ in $G$ and this
implies that if $x \in \Omega_{c}$ then $h^-_sx \in
\Omega_{c'}$ for any $s$ and any $c'>c$.  Haar measure is smooth on $G$ and the map 
$(s, t, \theta) \mapsto g_t h^-_s r_{\theta}$ is a local
homeomorphism. This implies that for any $c'>c$, the set $\Omega_{c'}$
contains
$$
\{\pi(g_t h_s^- r_{\theta'}) :  \theta \in \Theta_d\}, 
$$
which has $\mu$-positive measure whenever $\Theta_d$ has
positive Lebesgue measure. This proves the claim. 

The stronger result
about Hausdorff dimension would follows by a similar argument, provided we
knew that 
for any $c>0$, the Hausdorff dimension of $\Omega_c$ is less than
3, which is the dimension of $G/\Gamma$. In case $G/\Gamma$ has one cusp, 
this follows from \cite[Prop. 8.5]{margulis}. As explained to the authors by
Manfred Einsiedler \cite{Einsiedler}, 
the general case of the latter statement can be deduced from
his recent work with Kadyrov and Pohl \cite{EKP}.
\end{proof}

For a geodesic trajectory $\{g_tx\}$, either its projection in
$\HH/\Gamma$ belongs to a cusp for all $t>t_0$, or it returns
infinitely often to the complement of the cusps. This implies:

\begin{prop}\name{prop: excursion}
If $\Gamma$ is a non-uniform lattice, for all but countably many
directions $\theta$, the trajectory $\{g_t [r_{\theta'}]\}$ does not
diverge in $G/\Gamma$. 
\end{prop}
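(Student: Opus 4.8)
The plan is to prove Proposition~\ref{prop: excursion} by analyzing how a geodesic trajectory $\{g_t x\}$ behaves in $\HH/\Gamma$ when it eventually stays inside a cusp. The dichotomy stated just before the proposition --- either the projection to $\HH/\Gamma$ enters a cusp and never leaves it after some time $t_0$, or it returns infinitely often to the (compact) complement of the union of the cusps --- is the organizing principle. In the second case the trajectory is by definition not divergent in $G/\Gamma$, since it keeps returning to a fixed compact set; so the content of the proposition is entirely about the first case.

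So the key step is to show: for a fixed cusp $\cc$ with stabilizer a maximal parabolic $P$, there are only countably many directions $\theta$ for which $\{g_t[r_{\theta'}]\}$ is eventually trapped in $\cc$. After conjugating so that $P$ is generated by $z \mapsto z + h$ (equivalently so that the horoball is $\HH^+_{c_0}$ and $P$ is a standard unipotent subgroup fixing $\infty$), a trajectory that stays in a cusp for all large $t$ must, when lifted to $\HH$, converge to a parabolic fixed point of $\Gamma$, i.e. to a point in the $\Gamma$-orbit of $\infty$ on $\partial \HH = \R \cup \{\infty\}$. Using the right-action convention \equ{eq: convenient} and the explicit formula $\Im(\mathbf{i}.g) = 1/(a^2+c^2)$, the forward endpoint of the geodesic through $\mathbf{i}.r_{\theta'}$ is a fixed function of $\theta$; a short computation identifies it (it is $-\cot\theta'$ or a similar explicit expression, matching the $\E_{\theta'}$ direction). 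The trajectory is eventually trapped in $\cc$ precisely when this endpoint lies in $\Gamma \cdot \infty$. Since $\Gamma$ is countable, $\Gamma \cdot \infty$ is countable, and its preimage under the (injective-away-from-a-discrete-set) map $\theta \mapsto (\text{endpoint of } g_t[r_{\theta'}])$ is countable.

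Concretely I would carry this out as follows. First, reduce to a single cusp: $\HH/\Gamma$ has finitely many cusps, so it suffices to bound, for each, the set of $\theta$ eventually trapped there; a finite union of countable sets is countable. Second, fix such a cusp and conjugate $\Gamma$ so that the corresponding parabolic subgroup fixes $\infty \in \partial\HH$ and the cusp neighborhood is the image of $\HH^+_{c_0}$. Third, observe that if $\{g_t[r_{\theta'}]\}$ stays in the image of $\HH^+_c$ for all $t \ge t_0$ (for some $c \ge c_0$), then a lift of the geodesic ray is eventually contained in a single horoball $\gamma.\HH^+_c$ tangent to $\partial\HH$ at some point $\xi \in \Gamma \cdot \infty$, and hence the forward endpoint of that geodesic is exactly $\xi$. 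Fourth, compute the forward endpoint of the geodesic $t \mapsto \mathbf{i}.(g_t r_{\theta'})$ as an explicit function $\psi(\theta)$ of $\theta$, and note $\psi$ is locally injective (indeed real-analytic with nonvanishing derivative off a discrete set), so $\psi^{-1}(\Gamma \cdot \infty)$ is countable. That set contains all $\theta$ trapped in the chosen cusp, completing the proof.

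The main obstacle --- really the only subtlety --- is making precise the claim that ``eventually in the cusp'' forces the geodesic's forward endpoint to be the parabolic fixed point, rather than a mere technicality about horoballs. The point is that once the geodesic ray projects into $\cc_c$ and never leaves, its lift enters some horoball $\gamma.\HH^+_c$ and cannot leave it (leaving would project to leaving $\cc_c$, since the horoballs over a given cusp are disjoint for $c \ge c_0$); a geodesic ray that is forward-confined to a fixed horoball must converge to the (unique) boundary point of tangency. This is a standard fact about horoballs in $\HH$, and once it is stated cleanly the rest is the bookkeeping of the explicit $\SL(2,\R)$ formulas already set up in the proof of Proposition~\ref{prop: geodesic}. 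Everything else --- countability of $\Gamma$, finiteness of the cusp set, local injectivity of $\psi$ --- is routine.
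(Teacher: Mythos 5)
Your proposal is correct and takes essentially the same route as the paper, which proves the proposition by exactly the dichotomy you use (a geodesic either eventually stays in a single cusp of $\HH/\Gamma$ or returns infinitely often to the compact complement of the cusps) and leaves the remaining details implicit. Your write-up simply fills in those details: trapping in a cusp forces convergence to a parabolic fixed point, of which there are countably many, and the endpoint map $\theta\mapsto\psi(\theta)$ pulls a countable set back to a countable set.
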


\subsection{$\Z$-covers} \label{section:Zcover}
We recall some terminology
 from  \cite{Zcovers}.  Let $M$ be a compact translation surface.
 We fix a finite set $P$ of points containing the singularities. Denote the
relative homology by  $H_{1}(M, P; \Z)$  and  
 $H_{1}(M \sm P, \Z)$ the absolute homology of $M$ puctured at
$P$. The intersection form is a nondegenerate bilinear form   
 $$i: H_{1}(M, P; \Z) \times H_{1}(M \sm P, \Z) \to \Z.$$
The $\Z$-cover $\til M_{w}$ of $M \sm P$ associated to a
non-zero $w$ in $H_{1}(M, P, \Z)$ 
is the cover associated to the  kernel of the homomorphism 
$$\phi_{w}: \pi_{1}(M\sm P) \to \Z, \ \ \ \delta \mapsto i(w,
[[\delta]]),$$ 
where $[[\delta]]$ denotes the homology class of $\delta$. 

For a translation surface $M$ and $w \in H_1(M, P; \Z)$, let
$\mathrm{hol}(w) \in \R^2$ denote the holonomy along $w$; that is the
vector whose coordinates are respectively the integrals of the
elements $dx, dy$ along any path representing $w$. 
We have:

\begin{prop}\name{prop: Zcovers
correspondence} 
\begin{itemize}
\item[(i)]
There is a bijective correspondence between $\Z$-covers $\til M \to M$
and projective classes of cycles $w \in H_1(M, P; \Z)$. 
\item[(ii)]
The holonomy $\mathrm{hol} (w)$ vanishes if and only
if for any
$\theta$ for which $\phi_{t}^{\theta}$ is ergodic,
$\overline{\phi}_{t}^{\theta}$ is recurrent. 
\item[(iii)]
The Veech group of $\til M_w$ is Fuchsian. If it is non-elementary then
$\mathrm{hol}(w)=0$.  
\item[(iv)]
$\Gamma_{\til M}$
contains a finite index subgroup which descends to $\Gamma_M$. 
\item[(v)]
If $\{g_t [r_{\theta'}] \}$ is not divergent in $G/\Gamma_{\til M}$,
and $\til M \to M' \to M$ is an intermediate finite cover of $M$, then
$\{g_t [r_{\theta'}] \}$ is not
divergent in $G/\Gamma_{M'}.$ 
\end{itemize}
\end{prop}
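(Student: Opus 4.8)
The plan is to treat the five items separately, since each needs a different tool. The ergodic-theoretic core is item (ii), which I expect to be the main difficulty; items (i), (iii), (iv) are covering-space and discreteness bookkeeping, and (v) is an assembly of (iv) with soft facts about non-divergence under coverings.

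For item (i) I would argue as follows. Connected covers of the type considered — deck group infinite cyclic and generated by a translation automorphism — correspond to surjections $\pi_1(M\sm P)\to\Z$ modulo $\mathrm{Aut}(\Z)=\{\pm 1\}$; since $\Z$ is abelian and torsion-free these factor through $H_1(M\sm P;\Z)$, hence are the primitive elements of $H^1(M\sm P;\Z)$ up to sign. Lefschetz duality identifies $H^1(M\sm P;\Z)$ with $H_1(M,P;\Z)$ modulo torsion, carrying the evaluation pairing to the intersection form $i$, and under this identification the surjection attached to $w$ is exactly $\phi_w$; "primitive $w$, up to sign" is then the same as "projective class of $w$''. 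That the translation structure and the finitely many added punctures extend over $\til M_w$ is routine. For item (ii), fix $\theta$ with $\phi_t^\theta$ ergodic for Lebesgue measure on the compact surface $M$, normalized to mass $1$. Choosing a fundamental domain for $\langle S\rangle$ realizes $\til M$ as $M\times\Z$ and $\overline{\phi}_t^\theta$ as a skew product $(q,n)\mapsto(\phi_t^\theta q,\,n+c_t(q))$ over $\phi^\theta$ with a $\Z$-valued additive cocycle $c_t$, and $c_T(q)=i(w,[\gamma_{q,T}])$ where $\gamma_{q,T}$ is the time-$T$ trajectory from $q$ closed up by a uniformly bounded arc. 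The point is that $c_t$ has ergodic average $\E_\theta\wedge\hol(w)$: indeed $\frac1T[\gamma_{q,T}]$ converges to the Schwartzman asymptotic cycle $\mathcal C_\theta$ of $(\phi^\theta,\Leb)$, so the average equals $i(w,\mathcal C_\theta)=\int_M\eta_w(X_\theta)\,d\Leb$ where $\eta_w=f\,dx+g\,dy$ is a closed $1$-form Poincar\'e-dual to $w$ on $M\sm P$, and using $\int_M\eta_w\wedge\beta=\int_w\beta$ for $\beta=dx,dy$ one gets $\int_M f\,d\Leb=\hol(w)_2$, $\int_M g\,d\Leb=-\hol(w)_1$, whence the average is $\cos\theta\,\hol(w)_2-\sin\theta\,\hol(w)_1=\E_\theta\wedge\hol(w)$. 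Granting this: if $\hol(w)=0$ then the cocycle has zero average for every such $\theta$, so by Atkinson's theorem (applied to the return map to a transversal) the skew product is conservative, i.e. $\overline{\phi}^\theta$ is recurrent; conversely if $\hol(w)\neq 0$ then $\E_\theta\wedge\hol(w)\neq 0$ for all but two directions, and by Masur's criterion (Proposition \ref{prop: Masur condition}) one may pick such a $\theta$ with $\phi^\theta$ uniquely ergodic, and then the cocycle has nonzero drift, so by Birkhoff a.e. orbit of $\overline{\phi}^\theta$ escapes to infinity and it is not recurrent.

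For item (iii), affine automorphisms permute saddle connections, so $\Gamma_{\til M}$ preserves the set $V\subset\R^2$ of holonomy vectors of saddle connections of $\til M$; lifting saddle connections of $M$ shows $V$ contains a basis of $\R^2$, while $V$ is contained in the (discrete) set of holonomy vectors of saddle connections of the compact surface $M$; the stabilizer in $\SL(2,\R)$ of a discrete set containing a basis is discrete, so $\Gamma_{\til M}$ is Fuchsian. For item (iv), $\Aff(\til M)$ acts on the two-element set of ends of $\til M$ (a $\Z$-cover of a compact surface has exactly two ends); the kernel of this action normalizes $\langle S\rangle$ and therefore descends to $\Aff(M)$, so its image under $d$ is a finite-index subgroup $H\le\Gamma_{\til M}$ with $H\subseteq\Gamma_M$. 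This also yields the second part of (iii): each $g\in H$ descends from an automorphism of $\til M$, hence preserves $\ker\phi_w$, so $g_*w=\pm w$ in $H_1(M,P;\Z)$, and by naturality of holonomy $dg\cdot\hol(w)=\hol(g_*w)=\pm\hol(w)$; if $\hol(w)\neq 0$ this makes $\hol(w)$ a common $\pm1$-eigenvector of all $dg$, forcing each $dg$ to be $\pm$-unipotent with a common fixed point on $\partial\HH$, so $H$ — hence $\Gamma_{\til M}$, as $H$ has finite index — is elementary, contradicting the hypothesis. Finally, for item (v): first note that $\til M\to M'$ is again a $\Z$-cover ($\ker\phi_w\cap\pi_1(M')$ is normal in $\pi_1(M')$ with infinite cyclic quotient, with deck group $\langle S^m\rangle$ for some $m\ge 1$), so by (iv) there is a finite-index $H'\le\Gamma_{\til M}$ with $H'\subseteq\Gamma_{M'}$; non-divergence of $\{g_t[r_{\theta'}]\}$ in $G/\Gamma_{\til M}$ lifts to non-divergence in $G/H'$ (a finite covering, so preimages of compact sets are compact) and then projects to non-divergence in $G/\Gamma_{M'}$ (the continuous image of a compact set is compact).

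The step I expect to be the main obstacle is the cocycle-average identity in (ii): making rigorous that the ergodic average of $c_t$ is $\E_\theta\wedge\hol(w)$ requires care with the Schwartzman asymptotic cycle and the Poincar\'e-dual form $\eta_w$ on the punctured surface, with the integrability of $c_1$, and then with invoking the correct version of Atkinson's recurrence theorem for the induced skew product. Everything else is bookkeeping, the only slightly delicate ingredient being the two-ends fact underlying the descent in (iv).
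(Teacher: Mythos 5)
First, for calibration: the paper does not actually prove items (i)--(iv) --- it cites \cite{Zcovers} for them --- and its entire argument for (v) is the single observation that a subgroup of $\Gamma(\til M)$ which descends to $\Gamma(M)$ also descends to $\Gamma(M')$ for any intermediate finite cover. Your treatment of (v) is essentially that argument made explicit, and your from-scratch arguments for (i), (ii) and the first half of (iii) look sound to me: the Schwartzman-cycle computation of the drift $\E_\theta\wedge\hol(w)$ combined with Atkinson's theorem for the zero-mean case and Birkhoff for the nonzero-mean case is a correct (and self-contained) route to (ii), and the discreteness of the set of saddle-connection holonomies gives (the first half of) (iii) as you say.

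The genuine gap is in item (iv), and it propagates to your proof of the second half of (iii), which invokes (iv). You assert that an affine automorphism $f$ of $\til M$ fixing each of the two ends normalizes $\langle S\rangle$. What is automatic is only that $fSf^{-1}$ is again a translation automorphism of $\til M$ (its derivative is $df\cdot I\cdot df^{-1}=I$), i.e.\ that $\Aff(\til M)$ normalizes the full group $T$ of translation automorphisms; there is no reason for $fSf^{-1}$ to lie in $\langle S\rangle$ itself, which need not be normal, or even characteristic, in $T$ when $T$ strictly contains $\langle S\rangle$. For instance, if $T\cong\Z\times(\Z/2\Z)$ with $\langle S\rangle$ the first factor, an end-preserving $f$ could conjugate $S$ to $S\tau$ with $\tau$ the torsion element, and such an $f$ descends to $\til M/\langle S\tau\rangle$ rather than to $M$. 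Fixing the ends gives no leverage here, since every element of $T$ already fixes both ends. The statement is true, but it needs a further argument, for example: $\langle S\rangle$ has finite index $m$ in $T$ (because $\til M/T$ is a quotient of the compact surface $M$); the intersection $N$ of the finitely many subgroups of $T$ of index at most $m$ is characteristic in $T$, of finite index, and contained in $\langle S\rangle$; hence $\Aff(\til M)$ normalizes $N$ and permutes the finitely many subgroups of $T$ containing $N$, and the stabilizer of $\langle S\rangle$ under this action is the desired finite-index subgroup that descends. Without an argument of this kind, (iv) --- and with it your derivation of ``non-elementary $\Rightarrow\hol(w)=0$'' in (iii) --- is not established.
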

In (ii), by recurrence we mean that for any measurable $A \subset \til M$, for
a.e. $x \in A$ there is $t_n \to \infty$ such that $\overline{\phi}_{t_n}x \in
A$. If (ii) holds we say that $\til M$ is a {\em recurrent
$\Z$-cover.} 

\begin{proof}
Items (i)--(iv) are proved in \cite{Zcovers}, and (v) follows from 
(iv) and the fact that a subgroup of $\Gamma(\til M)$
which descends to $\Gamma(M)$ also descends to $\Gamma(M')$ for any
intermediate finite cover $M'$. 
\end{proof}

Propositions \ref{prop: Masur condition}, \ref{prop: excursion} and
\ref{prop: Zcovers 
correspondence}(v) imply one
of the hypotheses of Theorem \ref{thm: main}:

\begin{cor}\name{cor: intermediate}
Suppose $\til M \to M$ is a $\Z$-cover and $\theta$ is a direction
such that $\{g_t[r_{\theta'}]\}$ is not divergent in $G/\Gamma$, where
$\Gamma$ is the Veech group of $\til M$. Then $\theta$ is ergodic on
intermediate covers. In particular, if $\til M$ is a lattice surface,
then $\theta$ is ergodic on
intermediate covers for all but countably many $\theta$. 
\end{cor}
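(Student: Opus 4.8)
The plan is to deduce Corollary \ref{cor: intermediate} directly by chaining the three quoted results. First I would recall what ``ergodic on intermediate finite covers'' means: for every tower $\til M \to M' \to M$ with $M'$ a finite cover of $M$, the direction $\theta$ should be an ergodic direction on $M'$. Since $M'$ is compact (being a finite cover of the compact surface $M$), unique ergodicity of the straightline flow in direction $\theta$ on $M'$ would imply ergodicity, so it suffices to verify the hypothesis of Proposition \ref{prop: Masur condition}, namely that $\{g_t[r_{\theta'}]\}$ is not divergent in $G/\Gamma_{M'}$.

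So the key step is: starting from the hypothesis that $\{g_t[r_{\theta'}]\}$ is not divergent in $G/\Gamma_{\til M}$, apply Proposition \ref{prop: Zcovers correspondence}(v) to conclude that for any intermediate finite cover $\til M \to M' \to M$, the trajectory $\{g_t[r_{\theta'}]\}$ is not divergent in $G/\Gamma_{M'}$. Then Proposition \ref{prop: Masur condition} applied to the compact surface $M'$ gives that $\theta$ is a uniquely ergodic, hence ergodic, direction on $M'$. Since $M'$ was an arbitrary intermediate finite cover, $\theta$ is ergodic on intermediate covers. This handles the main assertion with essentially no computation — it is pure bookkeeping of the three cited propositions.

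For the ``in particular'' clause, suppose $\til M$ is a lattice surface, so $\Gamma = \Gamma_{\til M}$ is a lattice in $G = \SL(2,\R)$. If $\Gamma$ is a uniform lattice there is nothing to prove since $G/\Gamma$ is compact and no trajectory diverges; otherwise $\Gamma$ is a non-uniform lattice and Proposition \ref{prop: excursion} applies: for all but countably many $\theta$, the trajectory $\{g_t[r_{\theta'}]\}$ does not diverge in $G/\Gamma$. For each such $\theta$ the first part of the corollary gives that $\theta$ is ergodic on intermediate covers, completing the proof. (One should note that Proposition \ref{prop: Zcovers correspondence}(iii) guarantees $\Gamma_{\til M}$ is at worst Fuchsian, so ``lattice surface'' is a meaningful hypothesis here; and the countable exceptional set is uniform in the tower since it depends only on $\Gamma_{\til M}$.)

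I do not anticipate a genuine obstacle: the content has been pre-packaged into Propositions \ref{prop: Masur condition}, \ref{prop: excursion}, and \ref{prop: Zcovers correspondence}(v), and the corollary is their formal combination. The only point requiring a moment's care is making sure ``not divergent in $G/\Gamma_{\til M}$'' is exactly the input format of Proposition \ref{prop: Zcovers correspondence}(v) and that its output ``not divergent in $G/\Gamma_{M'}$'' is exactly the input of Proposition \ref{prop: Masur condition} — which it is, once one observes $M'$ is compact. So the write-up is short:

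\begin{proof}
Let $\til M \to M' \to M$ be an intermediate finite cover; since $M$ is compact, so is $M'$. By hypothesis $\{g_t[r_{\theta'}]\}$ is not divergent in $G/\Gamma_{\til M}$, so by Proposition \ref{prop: Zcovers correspondence}(v) it is not divergent in $G/\Gamma_{M'}$. By Proposition \ref{prop: Masur condition} applied to the compact surface $M'$, the direction $\theta$ is uniquely ergodic, in particular ergodic, on $M'$. As $M'$ was arbitrary, $\theta$ is ergodic on intermediate covers.

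Now suppose $\til M$ is a lattice surface, so $\Gamma_{\til M}$ is a lattice in $G$. If it is a uniform lattice, $G/\Gamma_{\til M}$ is compact and no trajectory diverges, so every $\theta$ is ergodic on intermediate covers. Otherwise $\Gamma_{\til M}$ is a non-uniform lattice, and by Proposition \ref{prop: excursion} the trajectory $\{g_t[r_{\theta'}]\}$ is non-divergent in $G/\Gamma_{\til M}$ for all but countably many $\theta$; for each such $\theta$ the preceding paragraph applies.
\end{proof}
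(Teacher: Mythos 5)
Your proposal is correct and follows exactly the paper's own (implicit) argument: the paper derives the corollary by combining Propositions \ref{prop: Masur condition}, \ref{prop: excursion}, and \ref{prop: Zcovers correspondence}(v) in precisely the way you describe. The only addition is your explicit handling of the uniform-lattice case, a harmless clarification the paper leaves tacit.
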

\qed

We give a more concrete description of the construction of the cover
$\til M_w$. One can
represent $w$ as $ \sum a_j \delta_j$, where $a_j \in \Z$, and the
$\delta_j$ are finitely many 
disjoint oriented arcs on $M$ which are either closed or have endpoints in
$P$. Now form countably many copies $\{M_k: k \in \Z\}$ (each with its
copies of the arcs $\delta_j$), and glue each
$M_k$ to $M_{\ell}$ along the left (resp. right) of $\delta_j$ if $\ell =
k+ a_j$ (resp. $\ell = k-a_j$). 
We now give a necessary and sufficient condition for the existence
of infinite strips on $\til M$. 
\begin{prop}\name{prop: having strips}
Let $\til M_{w} \to M$ be a recurrent $\Z$-cover of translation
surfaces associated to the homology class $w$. 
$\til M$ has an infinite strip if and only if there is a
cylinder $C$ on $M$, such that the homology class $[[\delta]]$ represented
by the core curve $\delta$ of $C$ satisfies $i(w, [[\delta]]) = k \neq
0.$ In this case $S^k$ maps the strip to itself, and is a translation along the
direction of the strip. 
\end{prop}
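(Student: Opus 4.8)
The plan is to establish the two directions of the equivalence by directly translating between the Euclidean geometry of strips in $\til M$ and the combinatorics of the covering construction. For the ``if'' direction, suppose $C \subset M$ is a cylinder whose core curve $\delta$ satisfies $i(w, [[\delta]]) = k \neq 0$. First I would lift $C$ to $\til M$: since $C$ is an embedded flat cylinder, isometric to $\R/c\Z \times (0,h)$ where $c$ is the circumference and $h$ the height, its preimage $p^{-1}(C)$ in $\til M$ is a disjoint union of connected components, each of which is a translation-invariant cover of $C$. The key point is that the holonomy $\phi_w$ evaluated on the core curve of $C$ is precisely $i(w,[[\delta]]) = k$, which is nonzero, so the core curve of $C$ does \emph{not} lift to a closed curve in $\til M$; hence each component $\Sigma$ of $p^{-1}(C)$ is the infinite cyclic cover of $C$ unrolling the core direction, i.e. $\Sigma$ is isometric to $\R \times (0,h)$, an infinite strip. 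Then I would observe that the deck transformation $S$ shifts the copies $M_k$ of $M$ by one, and since crossing $\delta$ once changes the sheet index by $k$, going once around the (unrolled) core of $\Sigma$ corresponds exactly to applying $S^k$; thus $S^k$ maps $\Sigma$ to itself and acts as translation by $v(\Sigma) = \mathrm{hol}(\delta)$ along the axis of the strip.

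For the ``only if'' direction, suppose $\Sigma \subset \til M$ is an infinite strip, isometric to $\R \times (-a,a)$. I would first argue that $\Sigma$ contains no singularities of $\til M$ in its interior (it is flat and simply connected as a subset, being isometric to a convex planar region), and that its projection $C = p(\Sigma) \subset M$ is contained in a maximal flat cylinder of $M$: indeed $p$ restricted to $\Sigma$ is a local isometry, and the image of the family of parallel bi-infinite geodesics foliating $\Sigma$ descends to a family of parallel geodesics on the compact surface $M$, which must therefore be periodic (a complete geodesic on a compact translation surface missing the singularities, contained in a maximal band of parallel such geodesics, is closed), so $C$ lies in a cylinder. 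Let $\delta$ be a core curve of this cylinder and $k = i(w,[[\delta]]) = \phi_w([[\delta]])$; I need $k \neq 0$. If $k$ were $0$, the core curve $\delta$ would lift to closed curves in $\til M$, so each component of $p^{-1}(C)$ would be isometric to a finite cylinder $\R/c\Z \times (\text{interval})$, which is bounded in the core direction and hence cannot contain the bi-infinite strip $\Sigma$ --- contradiction. Therefore $k \neq 0$, and the final assertion about $S^k$ follows as in the first direction.

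\textbf{Main obstacle.} The routine part is the explicit gluing bookkeeping relating ``crossing $\delta$ once'' to ``shifting sheet index by $i(w,[[\delta]])$'', which is essentially the definition of $\til M_w$ via the homomorphism $\phi_w$; one must be slightly careful that $\delta$ may cross the defining arcs $\delta_j$ several times and that the net sheet-change is the intersection number $i(w,[[\delta]])$, but this is straightforward. The step I expect to require the most care is the argument in the ``only if'' direction that the projection of an infinite strip lands inside a genuine maximal \emph{cylinder} of $M$: one must rule out that the strip projects onto something that spirals or is dense, using compactness of $M$, the fact that the geodesics foliating $\Sigma$ avoid the singularities (so the whole strip stays at bounded distance from the singularity set, within a fixed band), and the classical dichotomy for directional foliations on compact translation surfaces; also one should check that the core curve $\delta$ chosen is non-separating enough that $i(w,[[\delta]])$ is the relevant obstruction, i.e. that $[[\delta]]$ is genuinely the homology class detecting whether the core lifts --- but this is exactly the content of the covering being the kernel of $\phi_w$.
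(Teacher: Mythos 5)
Your proposal is correct and follows essentially the same route as the paper's proof: both directions rest on the observation that a component of $p^{-1}(C)$ is a cylinder or a strip according to whether the core curve lifts closed, i.e. whether $i(w,[[\delta]])=0$, and the converse uses the same classical fact that a bi-infinite geodesic on a compact translation surface staying a bounded distance from the singularities must be periodic. Your treatment of the ``if'' direction (identifying the component as the universal cover of the cylinder) and of the $S^k$ action is just a slightly more explicit version of what the paper does.
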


\begin{proof}
Given a cylinder
$C$ on $M$, a connected component of its lift to 
$\til M$ is a cylinder or a strip. Let $\delta$ be the core curve of
$C$. The cylinder $C$ lifts to a cylinder on $\til M$ if and only if
preimages of 
$\delta$ are compact loops. By definition of the covering, this holds
if and only if $i(w, [[\delta]]) = 0.$  
Conversely, if $\Sigma \subset \til M$ is an infinite strip, let $\ell$ be an
infinite straight line in its interior. Note that the projection of
$\Sigma$ to $M$ does not contain singularities, so that the image of
$\ell$ in $M$ is a straight line whose distance to singularities
remains bounded below. A basic fact on compact translation surfaces
(see e.g. \cite[\S1.6]{MaTa}) asserts that the projection $\delta$ of $\ell$
must be periodic, i.e. is the core curve of a cylinder $C$. That is the
image of $\Sigma$ covers $C$, and by the above, $i(w, [[\delta]]) \neq
0.$

Assume now that $i(w, [[\delta]]) = k \neq 0$. This exactly means that a
lift $\til \delta$ of $\delta$ starting at level 0 on $\til M$ ends
at level $k$. Thus each connected component of   
$\displaystyle\bigcup_{n \in \Z}S^n(\til \delta)$
is an infinite line in $\til M$ passing through an infinite strip, and
the action of $S^k$ is by translation along the line. 
\end{proof}





\subsection{Cocycles and essential values}
Suppose $F_t: X \to X$ is a probability measure preserving flow on a
non-atomic   
measure space $(X, \mu)$, and suppose $\alpha: X \times \R \to
\Z$ is a measurable cocycle, i.e., a function satisfying 
 \eq{eq: cocycle identity}{
\alpha(x, t+s) = \alpha(x,t)+\alpha(F_tx, s).
}
Form the space $X_\alpha = X 
\times \Z$ with the obvious measure, and the flow $\ov{F}_t:
X_{\alpha} \to X_{\alpha}$ 
defined by 
$$\ov{F}_t(x,n) = \left(F_tx, n+\alpha(x,t) \right).
$$
This flow is called a {\em $\Z$-valued skew product} (over $X$,
corresponding to 
$\alpha$). If two cocycles $\alpha$ and $\beta$ are cohomologous,
i.e. if there is a measurable $g: X \to \Z$ such that $\alpha(x,t)
= \beta(x,t)+g(F_tx) - g(x),$ then the skew products are
measurably equivalent flows.  
A number $k \in \Z$ is called an {\em essential value} for this skew
product if for any measurable $A \subset X$ with
positive measure, there is a set of $t$ of positive measure for which 
$$\mu \left \{x \in A : F_{t} x \in A, \, \alpha(x,t)=k \right\}>0.$$
There is a $\Z$-action on $X_\alpha$ obtained from the action of $\Z$ on
itself by addition in the
second factor; clearly it commutes with the $\ov{F}_t$-action, so for
every subgroup $k\Z \subset \Z$ we can form the space $X_\alpha /
k\Z$, and we have 
$$X_\alpha \to X_\alpha / k\Z \to X_\alpha / \Z
\simeq X.$$

\begin{prop}[{Schmidt \cite[Chap. 3]{Schmidt}}]\name{prop: essential values}
Suppose $k$ is an essential value. Then $\ov{F}_t$ is ergodic on
$X_\alpha$ if and only if it is ergodic on $X_{\alpha} / k\Z$. 
\end{prop}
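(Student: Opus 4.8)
The plan is to exploit the tower of extensions $X_\alpha \to X_\alpha/k\Z \to X$ and the structure theory of ergodic decomposition for cocycle skew products. Recall that $\ov F_t$-invariant sets in $X_\alpha$ correspond to measurable maps $\Phi \colon X \to 2^{\Z}$ satisfying a compatibility relation under the cocycle $\alpha$, namely $\Phi(F_t x) = \Phi(x) + \alpha(x,t)$ for a.e.\ $x$ and a.e.\ $t$, once we use that $F_t$ itself is ergodic on $X$ (this is implicit throughout the paper, since the $\theta$ in the application is ergodic on $M$). Writing $A = \{x : 0 \in \Phi(x)\} \subset X$, an $\ov F_t$-invariant set is, up to null sets, $\bigcup_{x} \{x\} \times \Phi(x)$, and one checks that the collection $\{n \in \Z : \{x : n \in \Phi(x) \text{ iff } n+k \in \Phi(x+k)\}\}$ behaves well with respect to the $\Z$-shift.

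The key step I would carry out is the following. Suppose $\ov F_t$ is ergodic on $X_\alpha/k\Z$; I want to conclude it is ergodic on $X_\alpha$ using that $k$ is an essential value. Let $B \subset X_\alpha$ be $\ov F_t$-invariant with $0 < \mu(B) $; push it down to $X_\alpha/k\Z$. Ergodicity there forces the image to be null or conull, so $B$ itself is, up to a null set, a union of $k\Z$-translates: $B = \bigcup_{j \in \Z} (B_0 + jk)$ where $B_0 \subset X \times \{0,1,\dots,k-1\}$ is invariant in the appropriate sense and $\mu(B_0)>0$. Now I invoke the essential value property of $k$: applied to the positive-measure set $A \df B_0 \cap (X \times \{0\})$ (assuming $k \in \Phi$-structure puts mass at level $0$; otherwise shift), for a positive-measure set of $t$ we have $\mu\{x \in A : F_t x \in A,\ \alpha(x,t) = k\} > 0$. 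But $\ov F_t$ maps $(x,0) \in B$ to $(F_t x, \alpha(x,t)) = (F_t x, k)$, which must again lie in the invariant set $B$; since $F_t x \in A$ means $(F_t x, 0) \in B_0 \subset B$ and also $(F_t x, k) \in B$, this shows the ``level $0$'' and ``level $k$'' pieces of $B$ over a positive-measure set of base points agree — i.e.\ $B$ is, up to null sets, actually $\Z$-invariant (not merely $k\Z$-invariant). Then $B = B' \times \Z$ for an $F_t$-invariant $B' \subset X$, and ergodicity of $F_t$ on $X$ forces $B' $, hence $B$, to be conull. The converse direction (ergodicity on $X_\alpha$ implies ergodicity on the quotient $X_\alpha/k\Z$) is immediate, since a factor of an ergodic system is ergodic.

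The main obstacle I expect is the bookkeeping in the step where the essential value property is converted into genuine $\Z$-invariance of $B$: one must be careful that ``$\mu\{x\in A : F_t x \in A, \alpha(x,t)=k\} > 0$'' is leveraged not just for a single $t$ but consistently, and that the resulting identification of levels propagates to a full-measure statement via the ergodicity of $F_t$ and a density/approximation argument on $X$. This is exactly the content of Schmidt's treatment in \cite[Chap.\ 3]{Schmidt}, so in the write-up I would either reproduce this argument carefully or simply cite it; the cleanest exposition uses the characterization of the group of essential values $E(\alpha) \subset \Z$ as precisely the group of periods of the $\sigma$-algebra of $\ov F_t$-invariant sets, from which the proposition is formal: $\ov F_t$ is ergodic on $X_\alpha$ iff $E(\alpha) = \Z$, and ergodicity on $X_\alpha/k\Z$ together with $k \in E(\alpha)$ gives $E(\alpha) \supseteq k\Z$ and $E(\alpha)/k\Z = E(\alpha \bmod k\Z) = \Z/k\Z$, whence $E(\alpha) = \Z$.
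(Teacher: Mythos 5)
The paper itself gives no proof of this proposition: it is quoted directly from Schmidt \cite[Chap.~3]{Schmidt}, so there is no in-paper argument to compare against. Judged on its own terms, your sketch assembles the right ingredients (essential values as periods of the invariant $\sigma$-algebra, the tower $X_\alpha \to X_\alpha/k\Z \to X$, the group $E(\alpha)$), and the final paragraph's formal route --- $\ov{F}_t$ ergodic on $X_\alpha$ iff $E(\alpha)=\Z$, together with $E(\alpha \bmod k\Z)=E(\alpha)/k\Z$ when $k\Z\subseteq E(\alpha)$ --- is the correct and standard one. But the ``key step'' in the middle has a genuine logical inversion. Writing $\sigma_k$ for the deck translation by $k$ in the fiber, the correct order of deduction is: first use the essential value property to show that any $\ov{F}_t$-invariant $B\subseteq X_\alpha$ satisfies $\mu(B\,\triangle\,\sigma_k B)=0$, i.e.\ $B$ is $k\Z$-periodic; only then does $B$ descend to an invariant set of $X_\alpha/k\Z$, where ergodicity of the quotient forces it to be null or conull. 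You instead assert that ergodicity of the quotient already forces $B$ to be a union of $k\Z$-translates --- that implication is false (the image of $B$ being conull only tells you $\bigcup_j\sigma_{jk}B$ is conull, nothing about $B$ itself) --- and you need the periodicity to even produce a well-defined invariant set downstairs.

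Two further points in that step do not close. First, matching the level-$0$ and level-$k$ pieces of $B$ yields (at best) $k\Z$-periodicity, not the ``$\Z$-invariance'' you claim; indeed, if $B$ were genuinely of the form $B'\times\Z$ you would never need ergodicity of $X_\alpha/k\Z$ at all, only ergodicity of the base, which signals that the conclusion is too strong to be derivable. Second, your application of the essential value property only produces a \emph{positive-measure} set of base points where levels $0$ and $k$ of $B$ agree; to get agreement almost everywhere one must argue by contradiction on the invariant set $B\,\triangle\,\sigma_k B$, choose a fiber level $n$ and a base set $A$ on which that fiber has density close to $1$ (the essential value property quantifies over subsets of $X$, not over fiber subsets of $X_\alpha$), and derive a contradiction. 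This is exactly the bookkeeping you flag as an ``expected obstacle,'' but it is the entire content of the direction you are trying to prove, so as written the argument has a hole there. The cleanest fix is to do what you suggest at the end: quote Schmidt's characterization of $E(\alpha)$ as the group of periods of the invariant $\sigma$-algebra and run the purely formal computation with $E(\alpha)/k\Z$, which is presumably why the paper simply cites \cite{Schmidt} rather than reproving it.
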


\ignore{
\subsection{Fuchsian groups}
A Fuchsian group is a discrete subgroup of $G.$
The Veech group of a compact translation surface is a Fuchsian
group. The same is true for a $\Z$-cover of a compact translation
surface.

Let $\mathbb{H}$ be the upper half plane, so that $G$ acts on
$\mathbb{H}$ by M\"obius transformations. It will be
convenient to consider the right action 
$$
z \cdot g =
\frac{dz-b}{-cz+a}, \ \ \mathrm{where} \ g = \left( \begin{matrix} a &
b \\ c & d \end{matrix} \right) \in G. 
$$
Let $\mathbf{i} = \sqrt{-1} \in \mathbb{H}$. 
The {\em limit set} of $\Gamma$ is the set $\Lambda$ of accumulation points of
sequences $(\gamma_{n}(\mathbf{i}))_{n\in \N}$ in $\partial \,
\mathbb{H} = \R \cup \{\infty\}$,
where $(\gamma_{n}) \subset \Gamma$. We say that $\Gamma$ is {\em
non-elementary} if $\Lambda$ is infinite; in this case $\Lambda$ has
no isolated points.  The limit set of $\Gamma$
depends only on the commensurability class of $\Gamma$. 

We say that $\Gamma$ is {\em of the first kind} if  $\Lambda$
coincides with $\partial \, \mathbb{H}$, and that $\Gamma$ is a {\em
lattice} if there is a fundamental domain for the action of $\Gamma$
on $\mathbb{H}$ of finite hyperbolic area. It is known that a lattice
is of the first kind but there are groups which are not lattices and
are of the first kind.

Let $U\mathbb{H}$ be the bundle of tangent directions on $\mathbb{H}$,
i.e. this is the space
of pairs $(z,x)$ where $x$ is a tangent vector at $z \in
\mathbb{H}$, considered up to multiplication by positive scalars. The
$G$-action on $\mathbb{H}$ induces a transitive 
action on $U\mathbb{H}$. The quotient $G/\Gamma$ is equipped with a
$G$-action on the left. 
It is not hard to see that the condition that a trajectory
$\{g_tx\Gamma: t>0\}$ does not diverge depends only on the
commensurability class of $\Gamma.$

Any $u=(z,x) \in U\mathbb{H}$
determines the geodesic  
ray starting at $z$ in direction $x$, we denote its endpoint in the
boundary $\partial \, \mathbb{H}$ by $\mathrm{Vis}(u).$ Since $G$ acts
by isometries on the right, $\Vis(u \cdot g) = \Vis(u)$ for any $g \in
G$. Let $B$ be the 
subgroup of upper triangular matrices in $G$, and let $u_0$ be the
upward-pointing tangent direction based at $\mathbf{i}$. Then 
\eq{eq: Vis}{
\Vis(u_0) =
\Vis(u_0\cdot b) = \infty, \ \ \mathrm{for  \ any \ } b \in B.
}
Since $G$ acts transitively on $U\mathbb{H}$ we may identify
$U\mathbb{H}$ with the $G$-orbit of $u_0$ and consider $\Vis$ as a map
$G \to \partial \, \mathbb{H}$. 
We let $\til \Lambda$ denote the pre-image of $\Lambda$
under $\mathrm{Vis}$, let $\Omega$ be the projection of $\til \Lambda$
in $G/\Gamma$. Using \equ{eq: Vis}, it is easy to check that $\Omega$
is $B$-invariant. 

We recall the following classical fact
\begin{lem}\name{lem: classical fact}
Suppose that $\Gamma$ is a non-elementary Fuchsian group with limit
set $\Lambda$. Then the $\Gamma$-action on $\Lambda$ is
minimal, and so is the action of $B$ on $\Omega$.
\end{lem}
\begin{proof}
The minimality of the action of $\Gamma$ is well-known, see
e.g. \cite{Starkov}. The second assertion follows from
the first, by
duality: $\partial \, \mathbb{H}$ is identified with $B \backslash G$ 
(note that we took for $B$ the full group of upper triangular
matrices, including in particular $-\mathrm{Id}$, so $B$ is the
stabilizer of $\infty$). 
Therefore both assertions are equivalent to the minimality of the
action of $B \times \Gamma$ action on $\til \Lambda$. 
\end{proof}


The map 
$
\displaystyle{\theta \mapsto \mathrm{Vis} (r_{\theta/2} (u_0))}
$
is a bijection of the set of directions $\PPP^1\R$ with $\partial \,
\mathbb{H}$. With this identification in mind we will refer
to $\partial \, \mathbb{H}$ as the {\em circle at infinity}, and
inquire of a direction $\theta$, whether or not it belongs
to $\Lambda$. Note in particular that under this bijection
$\theta =0$ corresponds to the point $\infty$. 

}

\section{From strips to essential values}\name{section: essential}
In this section we prove Theorem \ref{thm: main}. The main point will
be to approximate the flow in direction $\theta$ with the flow in a
strip, where it does not encounter discontinuity points. The condition
that the direction $\theta$ is well-approximated by strips ensures
that the flow in a strip remains close to the flow in direction
$\theta$ for a sufficiently long time, enabling us to produce
essential values. 

We will need some additional
notation. Let $M$ be a compact translation surface and
$\til M \to M$ a recurrent $\Z$-cover. For a direction $\theta$, $\phi_t$ and
$\ov{\phi}_t$ denote the straightline flows in direction $\theta$ on
$M$ and $\til M$ respectively (note our notation does not reflect the
dependence
on $\theta$). It is easy to see that $\ov{\phi}_t : \til M \to \til
M$ is a skew product over $\phi_t : M \to M$. Namely,  fix some
$x_0$ in $M$ and for each $x \in M$, a continuous path $\beta_{x_0,
x}$ from $x_0$ to $x$, and let $\beta_{x, x_0}$ denote the path with
the same trace in the opposite direction. Let $w \in
H_1(M, P; \Z)$ such that $\til M = \til M_w$ as in Proposition
\ref{prop: Zcovers correspondence}, and let $i$ be the intersection
form on $M$. Now define a cocycle  
\eq{eq: cocycle}{
\alpha (x,t) = i(w, \delta),
}
where $\delta$ is the path from $x_0$ to $x$ along $\beta_{x_0,x}$,
followed by moving from $x$ to 
$\phi_tx$ along the line in direction $\theta$, followed by
$\beta_{\phi_tx,x_0}$. It is not hard to show that $\alpha$ is a cocycle
and that making a different choice for $x_0$ and the $\beta_{x_0,x}$ would result
in a cohomologous cocycle. Moreover it follows from the description of
$\til M_w$ that the straightline flow $\ov{\phi}_t : \til M\to \til
M$ is measurably equivalent to the lift of $\phi_t$ to this skew
product (see \cite[Proof of Proposition
15]{Zcovers}). 

Since $\theta$ is well-approximable by cylinders, there are $k \neq
0, \,\vre>0$, and for $n=1,2, \ldots$ there are strips 
$\Sigma_n$ in $\til M$, such that the corresponding $v_n =
v(\Sigma_n)$ and $A_n= A(\Sigma_n)$ satisfy $k \equiv k(\Sigma_n)$ and 
\eq{eq: approximation1}{
|\E_\theta \wedge v_n| \leq (1-\vre) \, \frac{A_n}{2\|v_n\|} \ \ \ \ \ \mathrm{and}
\ \ \ A_n \geq \vre.
}
In light of Proposition \ref{prop: essential values}, Theorem
\ref{thm: main} follows immediately from: 

\begin{claim}
\name{claim: essential value}
$k$ is an essential value for the flow $\{\bar{\phi}_t\}$.
\end{claim}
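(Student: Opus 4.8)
To show that $k$ is an essential value, fix a measurable set $A \subset \til M$ of positive Lebesgue measure; by replacing $A$ with a subset and rescaling, we may assume $A$ sits inside a small flow box for $\ov{\phi}_t$ — that is, a product $R \times J$ of a small transversal $R$ (a short segment roughly perpendicular to direction $\theta$) with a short time interval $J$, all contained in a single chart of $\til M$. We must produce a positive-measure set of times $t$ such that $\mu\{x \in A : \ov{\phi}_t x \in A,\ \alpha(x,t) = k\} > 0$. The strategy is to realize such a return by pushing points of $A$ through one of the strips $\Sigma_n$ with $n$ large: inside an infinite strip the flow in direction $\theta$ encounters no singularities and stays parallel to the flat structure, so it is essentially a linear shear, and by Proposition \ref{prop: having strips} the deck transformation $S^k$ acts on $\Sigma_n$ by translation along the strip. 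Thus a trajectory that enters the strip, traverses it for (close to) one full period of the core curve, and exits near its starting point will have picked up homology intersection exactly $k$ with $w$, i.e. $\alpha(x,t) = k$.

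**Key steps, in order.** First I would fix the flow box $R\times J$ for $A$ and choose $\eta > 0$ small depending on $\vre$ and on the geometry of $A$. Second, using $A_n \geq \vre$ (uniform lower bound on the area of the cylinder $C_n = p(\Sigma_n)$) together with recurrence of the $\Z$-cover, I would argue that for infinitely many $n$ the strip $\Sigma_n$ is wide enough (width $\asymp A_n / \|v_n\|$, bounded below) that a definite-measure sub-box $A' \subset A$ flows into $\Sigma_n$ and spends a controlled amount of time inside. Third — and this is where \equ{eq: approximation1} enters — I would estimate the ``drift'': a trajectory in direction $\theta$ inside a strip whose core direction is $v_n/\|v_n\|$ drifts transversally at a rate proportional to $|\E_\theta \wedge v_n|/\|v_n\|$, so after traversing one core-curve period (time $\approx \|v_n\|$, reduced since $\theta$ is nearly the strip direction) the transversal displacement is at most $\approx \|v_n\| \cdot |\E_\theta \wedge v_n|/\|v_n\| \le (1-\vre) A_n/(2\|v_n\|)$, which is strictly less than half the strip's width. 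Hence the trajectory cannot exit the side of the strip before completing the loop; it returns to within the strip, displaced by less than half its width transversally, and by $S^k$ longitudinally. Fourth, I would choose the exit time $t = t(x)$ for each $x \in A'$ so that $\ov{\phi}_t x$ lands back in the original flow box $R \times J$: since the longitudinal return is governed by $S^k$ and the transversal error is uniformly less than half the strip width, one can select $t$ from an interval of times of positive length (a small window around the ``one full period'' time), and the resulting map $x \mapsto \ov{\phi}_{t}x$ is a near-isometry, so it sends a positive-measure subset of $A'$ into $A$. Along each such trajectory the only contribution to the cocycle $\alpha$ comes from crossing the arcs $\delta_j$ representing $w$; since the trajectory closes up (in $M$) around the core curve $\delta_n$ of $C_n$ exactly once and stays away from singularities, $\alpha(x,t) = i(w, [[\delta_n]]) = k(\Sigma_n) \equiv k$. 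Collecting over the positive-length window of times $t$ gives a positive-measure set of $t$ as required.

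**Main obstacle.** The delicate point is the bookkeeping that simultaneously (a) keeps the trajectory inside the strip for a full loop — this is exactly the content of the $(1-\vre)$ gap in \equ{eq: approximation1}, so it is built into the hypothesis — and (b) arranges that the return lands back in $A$ and not merely somewhere in $\til M$. Step (b) requires that the ``one-period'' return time be chosen from an interval whose length is bounded below independently of $n$ (so that after intersecting with the measure-theoretic constraints from $A$ one still has positive measure), and that the transversal return position vary continuously and with bounded distortion as $x$ ranges over $A'$. Controlling the measure of $\{x : \ov{\phi}_{t}x \in A\}$ uniformly in the large parameter $n$ — rather than just for a single return time — is the real work; it is handled by a standard density-point argument once the geometric picture (shear in a strip of definite width, longitudinal shift by $S^k$, transversal error $< \tfrac12$ width) is pinned down, but it must be done carefully because the strips $\Sigma_n$ degenerate (their core vectors $v_n$ can go to infinity) even though their cylinder areas stay bounded below.
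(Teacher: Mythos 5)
Your central mechanism is the same as the paper's: traverse the strip $\Sigma_n$ once in direction $\theta$, pick up the deck transformation $S^k$, and use the gap $(1-\vre)$ in \equ{eq: approximation1} to guarantee the trajectory stays inside the strip (transversal drift strictly less than the half-width $A_n/(2\|v_n\|)$) for the full period. That is exactly the heart of the paper's argument. But there is a genuine gap in how you base this loop at the given set $A$. You propose to flow a sub-box $A'\subset A$ \emph{into} $\Sigma_n$, loop once, and then ``land back in the original flow box.'' After the loop, however, the orbit is back near its \emph{entry point into the strip}, not near $A$: if the orbit reaches the middle of $C_n$ only at time $s=s(x)$ (which can be arbitrarily large and wildly varying over $A'$), then $\ov{\phi}_{s+a_n}(x)\approx \ov{\phi}_s(x)$, and the cocycle over the whole journey is $\alpha(x,s)+k$, not $k$. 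There is no mechanism in your sketch for returning to $A$ afterwards with the increment $k$ intact, and transporting the small closing error back along the orbit for time $s$ fails because nearby backward orbits can be separated by singularities. Relatedly, ``recurrence of the $\Z$-cover'' is not the tool that puts orbits into a prescribed cylinder; what is available and needed is ergodicity of the base flow $\phi_t$ on $M$ (from the hypothesis of ergodicity on intermediate covers).

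The paper closes this gap with Lemma \ref{lem: conull invariant}: almost every point of $M$ is \emph{itself} within $O(1/\|v_n\|)$ of the core curve of infinitely many of the cylinders $C_n$ (so that the embedded rectangle of Definition \ref{def: admits}, with corners at $x$ and $S^kx$, exists at the point itself). The proof uses that each good middle portion $C_n'$ has measure bounded below by $\vre^2/4$ (so $\limsup C_n'$ has positive measure), ergodicity of $\phi_t$ on $M$ to saturate, and --- crucially --- the fact that the directions of $v_n$ converge to $\theta$, so that nearness to the core curve propagates along orbit segments of any fixed length. With that lemma, one takes a density point $x_0$ of $A\cap\ca$, the loop is based at $x_0$ with return time $t\approx a_n$ (no transport into the strip is needed), and your density-point bookkeeping then goes through essentially as you describe, via the overlap estimate for the half-squares $P_0,P_1,P_2(t)$. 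Without some version of this conull-set step, your argument does not produce returns to an \emph{arbitrary} positive-measure set $A$ with cocycle value exactly $k$.
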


The rest of this section is devoted to proving Claim \ref{claim:
essential value}. Let $R$ be a rectangle, and for $c>0$, let $cR$ be a
concentric rectangle which is obtained from $R$ by dilating it by a
factor of $c$. By saying that $\til M$ contains a rectangle $R$ we
mean that there is an isometry which is a translation 
in charts, mapping $R$ to $\til M$; in particular the image
of $R$ under this isometry does not contain a singularity. When we
refer to the corners of $R$, we mean the images of the corners under
the above isometry. 
 
Let $p: \til M \to M$ be the covering map, and let
$C_n=p(\Sigma_n)$. Note that $p^{-1}(C_n)$ is the union of images of
$\Sigma_n$ under the deck group. Let $\theta^* = \theta + \pi/2$ be
the direction perpendicular to $\theta$. 
\begin{Def}\name{def: admits}
 We will say that $x \in \til M$ {\em
admits a rectangle at stage $n$} if $p^{-1}(C_n)$ contains $cR$,
where $R$ is the closed rectangle with sides in directions $\theta,
\theta^*$ and opposite corners at $x, \, S^{k}x,$ and $c =
(1-\vre/2)(1-\vre)^{-1}>1$.  
\end{Def}
We denote by $\til \ca$ the set of $x$ which admit a rectangle at
stage $n$, for infinitely many $n$ and by $\ca$, its projection to $M$.

\begin{lem}\name{lem: conull invariant}
The set $\ca$ is conull (i.e. its complement has measure zero).
\end{lem}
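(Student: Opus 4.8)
The goal is to show that the set $\ca \subset M$ of points whose preimages admit a rectangle at infinitely many stages is conull. By Poincaré recurrence and ergodicity-type arguments, it suffices to show that $\ca$ has positive measure (indeed, since admitting a rectangle at stage $n$ is a condition invariant under the flow $\phi_t$ in a suitable averaged sense, one expects a zero-one law, but the cleanest route is a direct measure estimate). The strategy is: for each $n$, estimate from below the measure of the set $\ca_n \subset M$ of points $x$ such that $p^{-1}(C_n)$ contains the dilated rectangle $cR$ with opposite corners at a lift $\til x$ and $S^k \til x$; then show $\limsup_n \ca_n$ has positive (in fact full) measure by a Borel–Cantelli / Fatou argument, using that the areas $A_n = A(\Sigma_n)$ are uniformly bounded below by $\vre$.

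\medskip

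\textbf{Key steps.} First I would fix $n$ and work inside the strip $\Sigma_n$, which is isometric to $\R \times (-a_n, a_n)$ with $2a_n = A_n/\|v_n\| \geq \vre/\|v_n\|$, and along which $S^k$ acts by translation by the vector $v_n$ (Proposition \ref{prop: having strips}). A lift $\til x \in \Sigma_n$ admits a rectangle at stage $n$ precisely when the closed rectangle $cR$ with sides in directions $\theta, \theta^*$ and opposite corners $\til x, \til x + v_n$ is contained in $\Sigma_n$. Since $\Sigma_n$ is an infinite strip in direction $v_n$ of half-width $a_n$, the rectangle $cR$ fits inside iff its extent in the direction $\theta^*$ perpendicular to the strip is at most $2a_n$; elementary trigonometry shows this extent is governed by $c \, |\E_\theta \wedge v_n| / \|v_n\|$ plus the $\theta^*$-width of $R$, and the hypothesis \equ{eq: approximation1} together with the choice $c = (1-\vre/2)(1-\vre)^{-1}$ is exactly what makes this work: $c|\E_\theta \wedge v_n| \leq (1-\vre/2) A_n/(2\|v_n\|) = (1-\vre/2)a_n < a_n$, leaving a definite margin. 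Consequently the set of admissible base-points $\til x$ inside $\Sigma_n$, projected to $M$, has area bounded below by a fixed positive multiple of $A_n \geq \vre$ — more precisely, one gets a sub-cylinder of $C_n$ of definite relative area — so $\mathrm{Leb}(\ca_n) \geq c_0$ for a constant $c_0>0$ independent of $n$.

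\medskip

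\textbf{From positive measure at each stage to a conull lim sup.} Having $\mathrm{Leb}(\ca_n) \geq c_0 > 0$ for all $n$ gives, by Fatou's lemma, $\mathrm{Leb}(\limsup_n \ca_n) \geq c_0 > 0$, so $\ca = \limsup_n \ca_n$ has positive measure. To upgrade to conull, I would observe that $\ca$ is (up to null sets) invariant under the ergodic flow $\phi_t$: if $x$ admits a rectangle at stage $n$ then so does every point on the $\phi_t$-orbit segment of length comparable to the $\theta$-side of $R$ — but the $\theta$-side is $\|v_n\|$, which need not be bounded below. The correct fix is to note that admitting a rectangle at stage $n$ for $\til x$ forces a whole $\phi$-flow interval through $\til x$ to stay in $p^{-1}(C_n)$, hence the indicator of $\ca_n$ is "flow-thick"; passing to $\limsup$ and using that $M$ has no $\phi_t$-invariant set of intermediate measure when $\theta$ is ergodic on $M$ (which holds, being implied by ergodicity on intermediate finite covers taking $M' = M$) yields that $\ca$ is conull.

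\medskip

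\textbf{Main obstacle.} The delicate point is the geometric estimate inside the strip: one must be careful that the relevant rectangle is the \emph{closed} $c$-dilate, that the strip $\Sigma_n$ may wind around $M$ so that $p|_{\Sigma_n}$ is far from injective (hence "$p^{-1}(C_n)$ contains $cR$" is genuinely weaker than "$\Sigma_n$ contains $cR$", and one should phrase the containment in the full preimage, using that the deck translates of $\Sigma_n$ tile $p^{-1}(C_n)$), and that the constant $c_0$ really is uniform — this is where the uniform lower bound $A_n \geq \vre$ is essential, since without it thin strips would contribute negligible measure. The second, softer obstacle is the invariance argument for the upgrade to conull; if a clean flow-invariance is not available, one can instead invoke that $\limsup_n \ca_n$ contains, for a.e. point of positive-measure set, an entire orbit, together with ergodicity of $\phi_t$ on $M$.
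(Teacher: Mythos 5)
Your proposal follows the same overall route as the paper's proof: a uniform lower bound on the measure of the stage-$n$ admissible set, coming from the margin built into the constant $c=(1-\vre/2)(1-\vre)^{-1}$ together with the hypothesis $A_n\geq\vre$; then Fatou to get a positive-measure $\limsup$; then ergodicity of $\phi_t$ on $M$ applied to the flow-saturation of that set. The first two steps are essentially correct and match the paper.

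The gap is in the saturation step, which you leave as an assertion, and neither of your two proposed mechanisms works as stated. ``Flow-thickness'' --- that a point admitting a rectangle at stage $n$ has a long $\phi$-orbit segment inside $p^{-1}(C_n)$ --- is true but does not help: lying in $p^{-1}(C_n)$ is much weaker than admitting a rectangle at stage $n$, so it gives no invariance of $\limsup_n\ca_n$, and a union of long flow segments need not be an invariant set to which ergodicity applies. Your fallback, that $\limsup_n\ca_n$ ``contains an entire orbit'' through a.e.\ point of a positive-measure set, is exactly the claim that must be proved. The missing ingredient is the observation with which the paper opens its proof: $\|v_n\|\to\infty$ (a compact surface has only finitely many cylinders with core curve of bounded length), hence by \equ{eq: approximation1} the direction of $v_n$ converges to $\theta$. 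One then shrinks the admissible band around the core curve $\ell_n$ of $C_n$ to a sub-band $C_n'$ of width $\vre^2/(4\|v_n\|)$ --- still of area at least $\vre^2/4$, uniformly in $n$ --- and checks that for each \emph{fixed} $t$ the transverse drift of $\phi_t$ relative to the strip direction, namely $|t|\,|\E_\theta\wedge v_n|/\|v_n\|=O(\|v_n\|^{-2})$, is eventually smaller than the remaining margin, which is of order $\|v_n\|^{-1}$; hence $\phi_t\bigl(\limsup_n C_n'\bigr)\subset\ca$ for every $t$, and this saturation is a genuinely $\phi_t$-invariant set of positive measure, hence conull. Without the convergence of directions and this two-tier band, flowing by a fixed $t$ could a priori push a point out of the admissible band at every stage. (A side remark: your parenthetical that the $\theta$-side of $R_n$ ``need not be bounded below'' is backwards, since $\|v_n\|\to\infty$; the genuine difficulty is the transverse drift, not the length of the long side.)
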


\begin{proof}
 On the compact surface $M$, the number
of cylinders with a given bound on the length of their core curve is
bounded. Therefore $\|v_n\| \to \infty$, so by
\equ{eq: approximation1}, the
direction of $v_n$ tends to $\theta$. 

Let 
$\ell_n$ be
a core curve of $C_n$ at equal
distances from its two boundaries, let $\bar{\ell}_n =
p^{-1}(\ell_n)$, and let $\Sigma'_n$ be the points of 
$p^{-1}(C_n)$ which are within distance  
$\eta_n=\displaystyle{\frac{\vre^2}{8\|v_n\|}}$ of $\bar{\ell}_n$. Note 
that $v_n$ is the diagonal of a rectangle in $\Sigma_n$ with corners
at $x, S^kx$ and $\|v_n\|$ is 
the circumference of $C_n$. This implies that  
\eq{eq: implies}{
h_n=\frac{A_n}{2\|v_n\|}\geq \frac{\vre}{2\|v_n\|}
}
is the distance from
$\ell_n$ to the boundary edges of $C_n$, hence from $\bar{\ell}_n$ to
the boundary edges of $p^{-1}(C_n)$. Therefore $c|\E_\theta \wedge
v_n|$ is the length of the side of a rectangle $cR$ as in Definition
\ref{def: admits}, in direction $\theta^*$.   
Inequality \equ{eq: approximation1} says that this
side has length at most $(1-\vre/2)h_n$, so that any point which
is of distance 
$$2\eta_n = \frac{\vre^2}{4\|v_n \|} \stackrel{\equ{eq:
implies}}{\leq} \frac{\vre h_n}{2}$$
 from $\bar{\ell}_n$ admits a rectangle at stage
$n$.

In particular any point of $\Sigma'_n$ admits a rectangle at stage
$n$. Let $C'_n = p(\Sigma'_n)$. By construction, each $\Sigma'_n$ is
deck group invariant, i.e. $\Sigma'_n = p^{-1}(C'_n)$.  
Now we let $\Sigma'=\limsup \Sigma'_n$ denote the set of points in
$\til M$ which belong
to infinitely many of the $\Sigma'_n$, and $C' = \limsup
C'_n$. Clearly $\Sigma' \subset \til
\ca$, and by construction, $\Sigma' = p^{-1}(C')$. For each $n$, the measure of each
$C'_n$ in $M$ is at least $\vre^2/4$, so $C'$
has positive measure and is contained in $\ca$. To prove the lemma, by ergodicity of
$\{\phi_t\}$ on $M$,
it suffices to show that $\bigcup_{t} \phi_t(C'),$ which is a
$\{\phi_t\}$-invariant set, is contained in $\ca$. So we fix $x \in
C'$ and $t \in \R$, and consider $x'=\phi_tx$. We have shown that there are infinitely
many $n$ for which $x$ is within $\eta_n$ of $\ell_n$, which is a
curve parallel to $v_n$. Since the direction of $v_n$ tends to
$\theta$, for all large enough $n$, $x'$ is within $2\eta_n$ of
$\ell_n$, so admits a rectangle at stage $n$. That is, $x' \in \ca$.
\end{proof}

\ignore{
Now let $R=R_{a,b}$ be an open rectangle for which $v \in g_0(R) \subset
\Sigma$, as in the definition of $\mathcal{U}$. We will denote by
$\lambda$ the measure on either $M$ or $\til M$, induced by the
two-dimensional Lebesgue measure. A subset $M'$ of either $M$ or $\til
M$ is conull if its complement has $\lambda$-measure zero.

\begin{lem} \name{conull-set}
There exist a neighborhood $W$ of $g_{0}$, and a conull subset $M' \subset
\til M$ such that for all $x \in 
M'$ and $g \in W$  there
exists a subsequence $n_j \to \infty$ for which both
$\varphi_{n_j}(x)$ and $\varphi_{n_j}(S^kx)$  are contained in a
subset $Q \subset \til M$ which is translation isomorphic to a rectangle $g(R)$.
\end{lem}

\begin{proof}
Let $V \subset \til M$ be a nonempty bounded open set such that for
any $x \in \cl{V},$ both $x$ and
$S^kx$ belong to $g_{0}R$. 
Since $R$ is open, there exists a neighborhood $W$ of $g_{0}$ such
that for every $x \in V$ and $g \in W$, both $x$ and $S^kx$ belong to
$gR$. Note that $z, S^kz$ both belong to a rectangle if and only if so
do $S^rz, S^{k+r}z$ for some (any) $r \in \Z$, and that $S$ commutes with the
$\varphi_n$. Thus 
in light of \equ{eq: gamman}, it is enough to prove that for  
almost every $x$, there are sequences
$n_j \to \infty, \, r_j \in \Z$ such that $S^{r_j} \varphi_{n_j}(x)$
belongs to $V$.

Denote by $V'$ an open subset of $V$ relatively compact in $V$, and let
$V_{0} = p(V') \subset V_{1} = p(V)$ be the corresponding open subsets in
$M$. Denote by $\ov{\varphi}_{n}$ the affine automorphism of $M$ induced
by $\varphi_{n}$. With this notation we need to prove that 
\eq{eq: need to prove}{
\limsup(\ov{\varphi}_{n}^{-1}V_{1}) \mathrm { \ is \ a
\ conull \ subset \ of \ } M, 
}
where, for any countable collections of sets $Z_n$,  $\limsup Z_n$
denotes the elements belonging to infinitely many of them. 
Affine automorphisms of a compact translation surface preserve area, so
$\lambda\left(\ov{\varphi}_{n}^{-1}(V_{0})\right) =  \lambda(V_{0})$ is independent
of $n$. Hence
\[
\lambda\left(\limsup(\ov{\varphi}_{n}^{-1}V_{0})\right) 
\geq \lambda(V_{0})>0.
\]
For $A \subset M,$ let
$$\Sat(A) = \bigcup_{a \in A}\bigcup_{t \in \R}\ov{\phi}_{t}^{\theta}(a)$$
be the saturation of $A$ under the straightline flow. 
Since the flow $\phi_{t}^{\theta}$ is ergodic on $M$,
\eq{eq: limsup conull}
{
\Sat\left(\limsup(\ov{\varphi}_{n}^{-1}V_{0}) \right) \ \ \mathrm{is \
conull.}
}

We now claim that if $x$ and $x'$ are on the same orbit for the flow
$\phi_{t}^{\theta}$, then 
\eq{eq: distance shrinks}{
d\left(\ov{\varphi}_{n}(x),
\ov{\varphi}_{n}(x') \right)\longrightarrow_{n \to  \infty}0. 
}
Indeed, let $\ell$ be the length of the segment $\sigma$ from $x$ to
$x'$ in direction $\theta$ and $v$ the holonomy vector of this
segment. We have: 
$$\hol(\ov{\varphi}_{n}(\sigma)) = \gamma_{n}(v) =
g_{n}^{-1}g_{t_{n}}r_{\theta'}(v) = g_{n}^{-1}g_{t_{n}} 
\left(\begin{smallmatrix}
0 \\
\ell
\end{smallmatrix}\right) 
= e^{-t_n} g_{n}^{-1} 
\left(\begin{smallmatrix}
0 \\
\ell
\end{smallmatrix}\right).$$
This tends to zero since $g_{n}$ belongs to a compact set. 
Since $\|\hol(\ov{\varphi}_{n}(\sigma))\|$ is an upper bound for
$d\left(\ov{\varphi}_{n}(x), \ov{\varphi}_{n}(x')\right)$, the claim
follows.  

Using this we have 
\[
\Sat(\limsup(\ov{\varphi}_{n}^{-1}V_{0})) \subset 
\limsup(\ov{\varphi}_{n}^{-1}V_{1}). 
\]
Indeed, for any 
$x \in \Sat\left(\limsup(\ov{\varphi}_{n}^{-1}V_{0})\right)$ there exists a point
$x'$ on the straightline orbit of $x$ that belongs to
$\ov{\varphi}_{n}^{-1} 
(V_{0})$ for infinitely many positive integers $n$. Since the closure of
$V_0$ is contained in $V_1$, in light of \equ{eq: distance shrinks},
$\ov{\varphi}_{n}(x) \in V_1$ for infinitely many $n$. 
From \equ{eq: limsup conull} we obtain
\equ{eq: need to prove}, concluding the proof. 
\end{proof}





}
Continuing with the proof of Claim \ref{claim: essential value}, let
$A \subset M$ be a measurable set of positive measure, and let $x_{0}$
be a Lebesgue density point of $A \cap \ca$. Let $x$ be any of the
preimages of $x_0$ in $\til M$. There is a sequence of
rectangles $R_n$ in $\til M$ which have $x,\, S^kx$ at opposite
corners, have sides in directions $\theta, \theta^*$, and such that
$cR_n$ is embedded in $\til M$. Note that the
side of $R_n$ in direction $\theta$ is getting longer with $n$ and the
side in direction $\theta^*$ is getting shorter; we will refer to
these as the long and short side of $R_n$ respectively, and denote
their lengths by $a_n, b_n$. 
We choose a fundamental domain measurably 
isomorphic to $M$ such that $x$ belongs to level 0 and $S^k(x)$
belongs to level $k$ (here we may use the concrete description of the
covering given in \S\ref{section:Zcover}), and we identify $A$ with a
subset of this fundamental domain.  

Let 
$Q_0$ be the square centered at $x$ with sides in directions
$\theta$ and $\theta^*$, such that the sidelength of $Q_0$ is
equal to $b_n$. 
Denote 
$$Q_1= S^k(Q_{0}) \ \ \mathrm{and} \ \ Q_2(t) = \bar{\phi}_t(Q_0),$$
where $t$ is a parameter 
(note that $Q_0, Q_1, Q_2$ all depend on $n$ but we omit this to
simplify notation). 

Following the notations in figure \ref{fig:rectangles}, for $n$ large enough, the right half of  $Q_0$ denoted by $P_0$ is  contained $cR_n$ and thus embedded in $\til M$. 
Moreover, by construction $cR_n$ does not contain any
singular point in its closure. This implies that if $t <t_n= ca_n -
(c-1)b_n$, then the restriction of 
$\bar{\phi}_t$ to $P_0$ is continuous, i.e. the right half of $Q_2(t)$ denoted by $P_2(t)$ is also a 
rectangle embedded in $\til M$. By the same reasoning the left hand side of $Q_1$ is a rectangle embedded in $\til M$.

Let $\lambda$ denote Lebesgue measure on $\til M$. We have
$\lambda(P_0) = \lambda(P_1) = \lambda(P_2).$ Moreover when $t=a_n$ we
have $P_1$ and $P_2=P_2(t)$, and obtain 
$\lambda\left(P_1 \cap P_{2}\right)
= \lambda(P_{0})$. See figure \ref{fig:rectangles}.

\begin{figure}[h!]
\begin{center}

\begin{tikzpicture}

 \begin{scope}[shift={(3cm,-5cm)}, fill opacity=0.5]
  \draw[black,very thick] \rectangleone;
 
 \draw[black,very thick] \rectangletwo; 
  \draw[black,very thick] \rectanglethree; 
   \draw[black,very thick] \rectanglefour; 
    \draw[black,very thick] \rectanglefive;

  \fill[red]\rectangletwo;
  
    \fill[blue] \rectanglethree;
   \fill[yellow] \rectanglefour;
    \fill[green] \rectanglefive;

\end{scope}

\end{tikzpicture}
\caption{\label{fig:rectangles}
Rectangles. The direction $\theta$ is vertical. The red rectangle
is $R_n$, its bottom left corner is $x$, its upper right
corner is $S^k(x)$. The blue rectangle is $Q_0$, the upper right
rectangle is $Q_1$, and the upper left
rectangle is $Q_2$.
}
\end{center}
\end{figure}
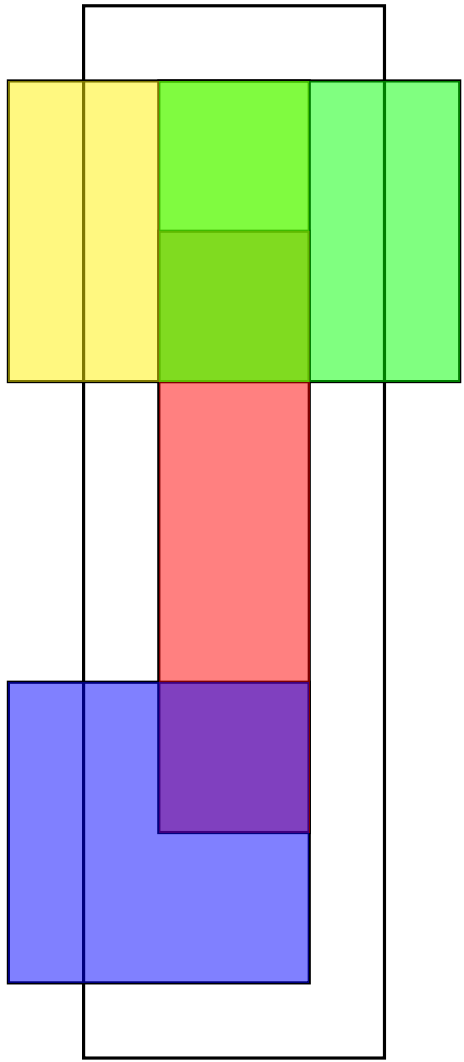

As $x_{0}$ is a density point of $A$, for every square $Q$ centered at
$x_{0}$ of sufficiently small diameter,  
$\frac{\lambda(Q \cap A)}{\lambda(Q)} \geq 15/16$. This inequality is
fulfilled for $Q_0$ for large enough $n$, since  
$b_n \to 0.$
A straightforward computation shows that the same is true for $P_0$.

Note that $t_n>a_n$ for all large $n$. 
Therefore, there is an interval $I$ centered at $a_n$ such that
if $t \in I$, 
$$
\frac12< b \df \frac{\lambda\left(P_1 \cap P_2(t)
\right)}{\lambda\left(P_{0}\right)} \leq 1.$$

In order to show that $k$ is an essential value, it will suffice to
show that $A_k \cap A^{(t)}$ has positive measure for all $t \in I$,
where 
$$A_k = A \times \{k\} \ \ \mathrm{and} \  A^{(t)} = \bar{\phi}_{t}
(A_0).$$

Denote $B^c =
\til M\sm B$ for every set $B$. We have the obvious relation 
\eq{eq: first split}{
\begin{split}
& \lambda\left(P_1\cap P_2(t)  \cap A^{(t)} \right) + \lambda\left(P_1^c\cap
P_2(t)  \cap A^{(t)}\right) \\
= & \lambda \left(P_2(t)  \cap A^{(t)}
\right) =\lambda \left(P_0  \cap A_0
\right)\geq
\frac{15}{16}\lambda\left(P_0\right).
\end{split}
}
We have 
$$\lambda \left(P_1^c\cap P_2(t) \cap A^{(t)} \right) \leq
\lambda \left(P_1^c\cap P_2(t) \right) =(1-b)\lambda(P_0),$$
so from \equ{eq: first split} and the definition of $b$: 
\[
\begin{split}
\lambda\left(P_1 \cap P_2(t) \cap A^{(t)} \right) & \geq \left(\frac{15}{16} -  1 +
b \right)\lambda\left(P_0\right) \\
& = \frac{1}{b}\left(\frac{15}{16}-1+b \right)\lambda(P_1 \cap P_2(t)). 
\end{split}
\] 
By the same reasoning,  
$$\lambda\left(P_1\cap P_2(t)  \cap A_{k} \right) \geq
\frac{1}{b}\left(\frac{15}{16} -  1 + b \right)\lambda\left(P_1 \cap P_2(t)
\right). $$ 
If, by contradiction, $A_k \cap A^{(t)}$ were of measure zero, from the
two preceding formulae we would have
\[
\begin{split}
& \frac{2}{b}\left(\frac{15}{16} -  1 + b\right)\lambda \left(P_1
\cap P_2(t) \right) \\
& \leq \lambda\left(P_1 \cap P_2(t) \cap A^{(t)} \right)+ \lambda \left
(P_1 \cap
P_2(t) \cap A_{k} \right)   \\  
& = \lambda\left(\left(P_1 \cap P_2(t) \cap A^{(t)} \right)\cup \left( P_1 \cap P_2(t)
\cap A_{k}\right) \right) \leq \lambda\left(P_1 \cap P_2(t)\right). 
\end{split}
\]
As $1/2 < b \leq 1$, the proportion $\frac{2}{b}(\frac{15}{16} -  1
+ b) > 1$ which leads to a contradiction. Therefore 
$ \lambda(A^{(t)} \cap A_{k}) >0$, as required. 
\qed
\section{Geodesic excursions and approximation by strips}
\begin{proof}[Proof of Theorem \ref{cor: Veech dichotomy}] We will
deduce the result from Theorem \ref{thm: main}. Using
Proposition \ref{prop: Zcovers correspondence}(iv) and passing to
finite-index subgroups, we can assume that the Veech groups 
of $M$ and $\til M = \til M_w$ are the same lattice $\Gamma$ in
$G$, and this lattice fixes $w$. As observed by Veech \cite{Veech -
alternative}, $\Gamma$ is non-uniform, and the cusps of $G/\Gamma$
correspond to cylinder decompositions 
of $M$. More precisely, cylinder decompositions in directions
$\theta_1, \theta_2$ on $M$ correspond to the same cusp in $G/\Gamma$
if and only if they are stabilized by conjugate parabolic elements $p_1, p_2
\in \Gamma$, where $p_i$ stabilizes direction $\theta_i$. Now suppose
$\til M$ has an infinite strip $\Sigma$ in direction $\theta_0$. It
follows from Proposition 
\ref{prop: having strips}, that to any infinite strip $\Sigma$ on
$\til M_w$, there is a corresponding cylinder $C$ on $M$ with core
curve $\delta$, such that
$k(\Sigma) = i(w, [[\delta]])$, $A(\Sigma) = \mathrm{area}(C)$ and
$v(\Sigma) = \hol(\delta)$. 

We say that strips $\Sigma_1, \Sigma_2$ are $\Gamma$-equivalent if
there is an affine automorphism of $\til M$ mapping $\Sigma_1$ to
$\Sigma_2$. It follows from Proposition \ref{prop:
Zcovers correspondence}(iv), and the fact that affine automorphisms
preserve the intersection pairing, that if
$\Sigma_1$ and $\Sigma_2$ are equivalent then 
$$k(\Sigma_1) =   
k(\Sigma_2), \ \ \ \ A(\Sigma_1)=A(\Sigma_2).$$
Also $v(\Sigma)$ is fixed by a parabolic element of $\Gamma$ so has a
discrete orbit under $\Gamma$. 

Let $\{x_1, \ldots, x_r\} \subset \R^2 \sm \{0\}$ be the vectors
$v(\Sigma_i)$, where $\Sigma_i$ range over representatives of the
$\Gamma$-equivalence classes of strips in $\til M$. By assumption $r
\geq 1$ and by the above discussion, $r$ is at most the
number of cusps in $\HH/\Gamma$. We obtain that
there is $d>0$ such that $\theta$ is well-approximated by strips if
and only if there is $i$ such that $\theta$ is $d$-well-approximated
by $\Gamma x_i$. 
Theorem \ref{cor: Veech dichotomy} is now a direct consequence of
Theorem \ref{thm: main} and 
Corollary \ref{cor: intermediate} and Proposition \ref{prop: a.e. and hd}.
\end{proof}

Let $\partial \, \HH = \R \cup \{0\}$ be the boundary of $\HH$ in the
one-point compactification of the complex plane.
Recall that there is a $\Gamma$-equivariant bijection $\mathrm{Vis}: S^1 \to \partial
\, \HH$ known as the `visibility map'. The {\em limit set} of a
Fuchsian group $\Gamma$ is the 
set of accumulation points of any of its orbits in $\HH \cup \partial
\, \HH$.

%

\begin{prop} \label{prop:Gdelta}
If $\til M \to M$ is a $\Z$-cover with an infinite strip, and such
that the Veech group $\Gamma$ of $\til M$ is of the first kind, then the set of well approximated directions is a dense $G_{\delta}$ subset of $S^1.$
\end{prop}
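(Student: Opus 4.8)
The plan is to show that the set of directions that are well-approximated by strips is a dense $G_\delta$ in $S^1$, by reformulating the well-approximation condition in terms of the visibility map and using minimality of the $\Gamma$-action on the limit set $\Lambda = S^1$ (which holds since $\Gamma$ is of the first kind). First I would reduce, exactly as in the proof of Theorem \ref{cor: Veech dichotomy}, to the case where the Veech groups of $M$ and $\til M$ coincide with a single lattice --- wait, here $\Gamma$ need not be a lattice, only of the first kind --- so I keep $\Gamma$ as the common Veech group fixing $w$, and I recall from Proposition \ref{prop: having strips} that infinite strips correspond to cylinders $C$ on $M$ with $i(w,[[\delta]]) \neq 0$, with the data $k(\Sigma), A(\Sigma), v(\Sigma)$ being $\Gamma$-equivariant along $\Gamma$-orbits. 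Fixing one infinite strip $\Sigma_0$ with holonomy $v_0 = v(\Sigma_0)$, the vector $v_0$ is fixed by a parabolic $p \in \Gamma$, hence has a discrete $\Gamma$-orbit, and the set $\{v(\gamma \Sigma_0) : \gamma \in \Gamma\}$ is contained in the discrete set $\Gamma v_0$ with uniformly bounded-below areas $A$ and constant $|k|$.

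The key step is the following equivalence: by Proposition \ref{prop: geodesic} (the direct implication), for a fixed $d>0$ there is $c>0$ so that $\theta$ being $d$-well-approximable by $\Gamma v_0$ is implied by the geodesic trajectory $\{g_t[r_{\theta'}]\}$ penetrating infinitely often into the cusp neighborhood $\cc_c$ associated to the parabolic $p$. So it suffices to show that the set of $\theta$ for which $\{g_t[r_{\theta'}]\}$ penetrates $\cc_c$ infinitely often is a dense $G_\delta$. Writing this set as $\bigcap_{N} \bigcup_{t > N} \{\theta : g_t[r_{\theta'}] \in \interior \cc_c\}$ --- more carefully, as a countable intersection over $N \in \N$ of the open sets $U_N = \{\theta : \exists t > N, \ g_t [r_{\theta'}] \in \interior \cc_c\}$ --- exhibits it as a $G_\delta$, since each $U_N$ is open by continuity of $(\theta, t) \mapsto g_t [r_{\theta'}]$ in $G/\Gamma$ and openness of the cusp neighborhood's interior. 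Density of each $U_N$ is where the first-kind hypothesis enters: the endpoint of the geodesic ray $\{g_t [r_{\theta'}]\}$ in $\partial\HH$ is $\Vis$ of the corresponding direction, the cusp point (fixed point of $p$ on $\partial\HH$) is a parabolic fixed point and hence in $\Lambda = \partial\HH$, and the set of $\theta$ whose forward geodesic ray lands exactly at this parabolic point is dense (it is the $\Gamma$-orbit of a single boundary point, dense by minimality of the $\Gamma$-action on $\Lambda$, which equals all of $\partial\HH$ since $\Gamma$ is of the first kind); any such $\theta$ has its ray eventually entering every cusp neighborhood, so lies in every $U_N$, and this dense set is contained in $\bigcap_N U_N$.

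The main obstacle I anticipate is handling the time-shift: I need to see that $U_N$ is dense for \emph{every} $N$, not just that $\bigcap U_N$ is nonempty. The clean way is to observe that a geodesic ray converging to a parabolic fixed point $\xi \in \partial\HH$ enters arbitrarily small cusp neighborhoods of the cusp at $\xi$ at arbitrarily large times (a standard fact about horoballs and parabolic points), so the dense set $D = \{\theta : \Vis(r_{\theta/2} u_0) \in \Gamma\xi\}$ is contained in $U_N$ for all $N$ simultaneously. Then $D \subset \bigcap_N U_N \subset \{\text{well-approximated directions}\}$ gives density, and the $G_\delta$ structure is automatic. One subtlety worth a sentence: the well-approximated set is a priori a union over finitely many $\Gamma$-orbits of strip-holonomies $x_1, \dots, x_r$, so it is a \emph{finite union} of sets each of which is a dense $G_\delta$ by the above (using the cusp corresponding to $x_i$); a finite union of dense $G_\delta$ sets is again a dense $G_\delta$, so the conclusion follows. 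I would also remark that density alone would suffice for the final application (Corollary \ref{cor: FrUl}) via Baire category once combined with Corollary \ref{cor: intermediate} and Proposition \ref{prop: excursion}, which handle the "ergodic on intermediate finite covers" hypothesis on a conull — in fact co-countable — set.
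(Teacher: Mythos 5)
Your proposal is essentially correct, but it takes a genuinely different route from the paper. The paper's proof never passes to the geodesic flow: it fixes one strip $\Sigma$ with area $A$, sets $x=v(\Sigma)$ and $d<A/2$, enumerates $\Gamma=\{\gamma_1,\gamma_2,\ldots\}$, and lets $G_n$ be the set of $\theta$ satisfying the strict inequality \equ{eq: approximates} for some $\gamma\in\Gamma\sm\{\gamma_1,\ldots,\gamma_n\}$; each $G_n$ is open, and is dense because it contains all but finitely many points of a $\Gamma$-orbit on $\partial\HH\cong S^1$, which is dense since $\Gamma$ is of the first kind. The intersection $\bigcap_n G_n$ is then a dense $G_\delta$ consisting of $d$-well-approximable, hence well-approximated, directions. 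You instead route the argument through the cusp-excursion dictionary: you use the converse half of Proposition \ref{prop: geodesic} to replace well-approximability by infinite penetration into a fixed $\cc_c$, write that set as $\bigcap_N U_N$ with $U_N$ open, and get density from the $\Gamma$-orbit of the parabolic fixed point. Both arguments use first-kindness in the same place (density of a $\Gamma$-orbit in the full boundary), but yours is heavier in two respects: it relies on the direction of Proposition \ref{prop: geodesic} that the paper explicitly leaves to the reader, and that proposition is stated only for non-uniform \emph{lattices}, whereas here $\Gamma$ is merely of the first kind (as in the Fraczek--Ulcigrai examples, where it is infinitely generated). The extension is routine --- the computation only uses that the stabilizer of $x$ is parabolic and that a precisely invariant horoball exists --- but you flag the non-lattice issue once and then do not address it where it actually bites. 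What your approach buys is an explicit link to the geodesic-flow description of $\Theta$ mentioned in the introduction. Finally, note that both your argument and the paper's literally establish only that the well-approximated set \emph{contains} a dense $G_\delta$ (i.e.\ is residual), which is all that is used in Corollary \ref{cor: FrUl}; your closing remark on this point, and your observation that a finite union of dense $G_\delta$ sets is again a dense $G_\delta$, are correct.
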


\begin{proof}
Let $\theta_0 \in
S^1$ be the direction of a strip $\Sigma \subset \til M$.  Let $A = A(\Sigma) >0, \, k =
k(\Sigma) \in \Z \sm \{0\}, \, x=v(\Sigma) \in \R^2$ be the
corresponding elements as in the preceding proof. Let $d < A/2$, and
enumerate the elements of $\Gamma$ as  $\{\gamma_1, \gamma_2,
\ldots\}$. For each $n$ let $G_n$ be the set of $\theta$ satisfying
\equ{eq: approximates} for some $\gamma \in \Gamma \sm \{\gamma_1,
\ldots, \gamma_n\}$. 
Then $G_n$ is clearly open. Let $\theta_1 \in G_n$ since $G_n$ contains all
but a finite subset of the orbit $\Gamma \theta_1$, it is dense
in $S^1$. 

Therefore the set $\Omega = \bigcap G_n$ is a dense $G_{\delta}$
subset of $\Lambda$. If $\theta$ belongs to $\Omega$ it is well approximable by an infinite number of strips directions.
\end{proof}

\begin{cor}\label{cor: FrUl} If $M$ is a lattice surface,
if $\til M \to M$ is a $\Z$-cover with an infinite strip, 
and   Veech group of $\til M$ is Fuchsian of the first kind, then the set
of ergodic directions on $\til M$ is a dense $G_{\delta}$. 
\end{cor}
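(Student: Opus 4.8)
The plan is to combine Proposition~\ref{prop:Gdelta} with Theorem~\ref{thm: main}, the only remaining input being the verification of the second hypothesis of Theorem~\ref{thm: main}, namely that the well-approximated directions produced by Proposition~\ref{prop:Gdelta} are also ergodic on intermediate finite covers. First I would invoke Proposition~\ref{prop:Gdelta}: since $\Gamma = \Gamma(\til M)$ is Fuchsian of the first kind and $\til M$ has an infinite strip, the set $\Omega$ of directions well-approximated by strips is a dense $G_\delta$ in $S^1$. (Here I should note that being ``of the first kind'' is exactly what makes $\Omega$ dense, rather than merely dense in the limit set; this is the place where the hypothesis on $\Gamma$ is used.)

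Next I would handle the ``ergodic on intermediate finite covers'' condition. By Corollary~\ref{cor: intermediate}, it suffices to know that $\{g_t[r_{\theta'}]\}$ does not diverge in $G/\Gamma$; moreover, since $M$ is a lattice surface, Corollary~\ref{cor: intermediate} already tells us this holds for all but countably many $\theta$. So I would let $N \subset S^1$ be the countable exceptional set from Corollary~\ref{cor: intermediate}, and set $\Omega' = \Omega \setminus N$. Since $\Omega$ is a dense $G_\delta$ and $N$ is countable (hence meager), $\Omega'$ is again a dense $G_\delta$ in $S^1$; alternatively, one can argue directly that removing a countable set from a dense $G_\delta$ in a perfect Polish space leaves a dense $G_\delta$, writing $S^1 \setminus N$ as a countable intersection of dense open sets and intersecting with the $G_n$ of Proposition~\ref{prop:Gdelta}.

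Then, for every $\theta \in \Omega'$, both hypotheses of Theorem~\ref{thm: main} are satisfied: $\theta$ is well-approximated by strips (as $\theta \in \Omega$), and $\theta$ is ergodic on intermediate finite covers (as $\theta \notin N$, via Corollary~\ref{cor: intermediate}). Therefore Theorem~\ref{thm: main} gives that $\theta$ is an ergodic direction on $\til M$. Hence the set of ergodic directions on $\til M$ contains the dense $G_\delta$ set $\Omega'$, which proves the corollary.

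I do not expect a serious obstacle here: the statement is essentially a bookkeeping combination of results already established in the excerpt. The only mildly delicate point is making sure the intersection of the dense $G_\delta$ from Proposition~\ref{prop:Gdelta} with the co-countable set from Corollary~\ref{cor: intermediate} is still a dense $G_\delta$ — this uses that $S^1$ with finitely many points removed is still a Baire space in which singletons are nowhere dense, so each $\{ \theta \} $, $\theta \in N$, has open dense complement, and Baire's theorem applies. One should also double-check that Proposition~\ref{prop:Gdelta}'s conclusion is literally stated for ``$S^1$'' rather than ``$\Lambda$'' when $\Gamma$ is of the first kind (it is, since $\Lambda = \partial\HH \cong S^1$ in that case), so that the density is genuinely in all of $S^1$.
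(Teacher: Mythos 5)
There is a genuine gap, located in your verification of the ``ergodic on intermediate finite covers'' hypothesis. You invoke the second sentence of Corollary \ref{cor: intermediate} to conclude that all but countably many $\theta$ are ergodic on intermediate covers, justifying this by ``since $M$ is a lattice surface.'' But that clause of Corollary \ref{cor: intermediate} requires $\til M$ (not $M$) to be a lattice surface: it rests on Proposition \ref{prop: excursion}, which needs $\Gamma(\til M)$ to be a \emph{non-uniform lattice} so that the complement of the cusps in $\HH/\Gamma(\til M)$ is compact and every non-eventually-cuspidal geodesic is non-divergent. In the setting of Corollary \ref{cor: FrUl} the Veech group of $\til M$ is only assumed to be of the first kind; in the motivating Fraczek--Ulcigrai examples it is infinitely generated, hence \emph{not} a lattice, $G/\Gamma(\til M)$ has infinite volume, and there is no reason the divergent directions in $G/\Gamma(\til M)$ should form a countable (or even null) set. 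So the exceptional set $N$ you propose to remove is not known to be countable, and the Baire-category step does not go through as written.

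The repair is exactly what the paper does: verify ergodicity on intermediate finite covers directly from the hypothesis on $M$ rather than through $\Gamma(\til M)$. Since $M$ is a lattice surface, so is every intermediate finite translation cover $M'$ (their Veech groups are commensurable, via Proposition \ref{prop: Zcovers correspondence}(iv)), and hence by the Veech dichotomy (equivalently, Masur's criterion in $G/\Gamma_{M'}$, where $\Gamma_{M'}$ \emph{is} a non-uniform lattice) every non-parabolic direction is uniquely ergodic on each $M'$. The parabolic directions form a countable set, and removing a countable set from the dense $G_\delta$ of Proposition \ref{prop:Gdelta} leaves a dense $G_\delta$ --- that part of your bookkeeping is correct and is the only place the countable-exception argument is actually needed. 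With this substitution your proof matches the paper's, which simply asserts that the directions in $\Omega$ are ergodic on finite covers ``since $M$ is a Veech surface'' and then applies Theorem \ref{thm: main}.
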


\begin{proof}
By construction, the points in $\Omega$ are well approximated by
strips, namely the strips in the orbit of $\Sigma$, as in the
preceding proof. So in order to apply Theorem \ref{thm: main}, it
suffices to show that the directions in $\Omega$ are ergodic on finite covers of $M$.
This is certainly true since $M$ is a Veech surface. 
\end{proof}

\begin{remark} As we already mention,
As mentioned above, Corollary \ref{cor: FrUl} applies
 to examples studied by Fraczek and Ulcigrai \cite{Franczek-Ulcigrai}. They consider a genus 2
 square tiled surface $M$ and assume that the cocycle defining the $\Z$-cover
 belongs to the intersection of the absolute homology and the kernel
of the holonomy. By \cite{Zcovers}, Cor. 18, the Veech group of
the cover $\til{M}$ is of the first kind. By the result of Fraczek and Ulcigrai, the set of ergodic directions for the linear flow on $\til{M}$ has zero measure, but by Corollary \ref{cor: FrUl}, it is a dense $G_{\delta}$.

\end{remark}

\section{Examples}
There are several known constructions of examples satisfying the
assumptions of Theorem \ref{cor: Veech dichotomy}. 
The easiest is the infinite staircase, and there are other $\Z$-covers
of the torus which satisfy the hypotheses of Theorem \ref{cor: Veech
dichotomy}. In these examples the proof 
of ergodicity can be reduced to known results about skew products over an irrational
rotation. 

\begin{figure}[h!]
\begin{center}

\begin{tikzpicture}

 \tikzstyle{a line}=[very thick, dash pattern=on 8pt off 2pt];
\draw (0,0)   {}
        -- ++(45:1cm) node [below] {$e$}
        -- ++(45:1cm) 
                -- ++(90:1cm)
        -- ++(90:1cm) 
        -- ++(135:1cm) node [above] {$d$}
              -- ++(135:1cm)
        -- ++(180:1cm) node [above] {$c$}
                -- ++(180:1cm) 
        -- ++(225:1cm) node [left] {$e$}
                -- ++(225:1cm) 
        -- ++(270:1cm) node [left] {$a$}
                -- ++(270:1cm) 
       -- ++(315:1cm) node [below] {$d$}
              -- ++(315:1cm)
                 -- ++(360:1cm)  node [below] {$b$}
        -- (0,0);

\draw (4.82,0)   {}
        -- ++(45:1cm) node [below] {$f$}
        -- ++(45:1cm) 
                -- ++(90:1cm)  node [right] {$a$}
        -- ++(90:1cm) 
        -- ++(135:1cm)  node [above] {$g$}
              -- ++(135:1cm)
        -- ++(180:1cm)  node [above] {$b$}
                -- ++(180:1cm) 
        -- ++(225:1cm)  node [above] {$f$}
                -- ++(225:1cm) 
        -- ++(270:1cm) 
                -- ++(270:1cm) 
       -- ++(315:1cm)  node [below] {$g$}
              -- ++(315:1cm) 
               -- ++(360:1cm)  node [below] {$c$}
        -- (4.82,0);

         \draw[a line] (-3.41,1.41) -- ++(45:4.82cm);
         \draw[a line] (2.82,0) -- ++(45:4.82cm);
        
\end{tikzpicture}
\caption{\label{fig:octagon}
Double octagon.
}
\end{center}
\end{figure}
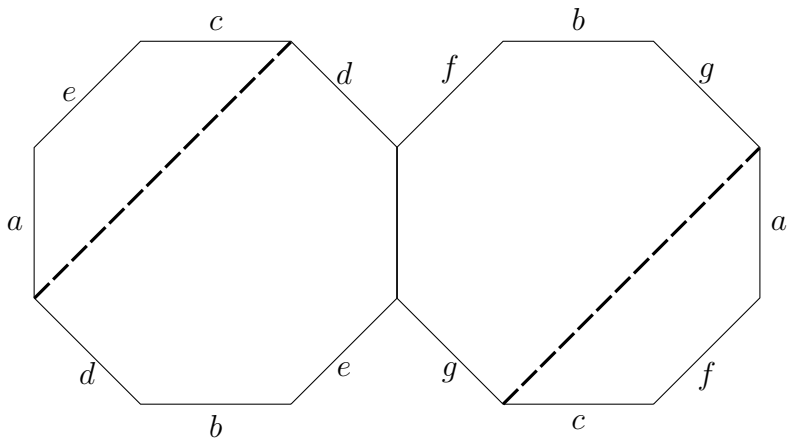

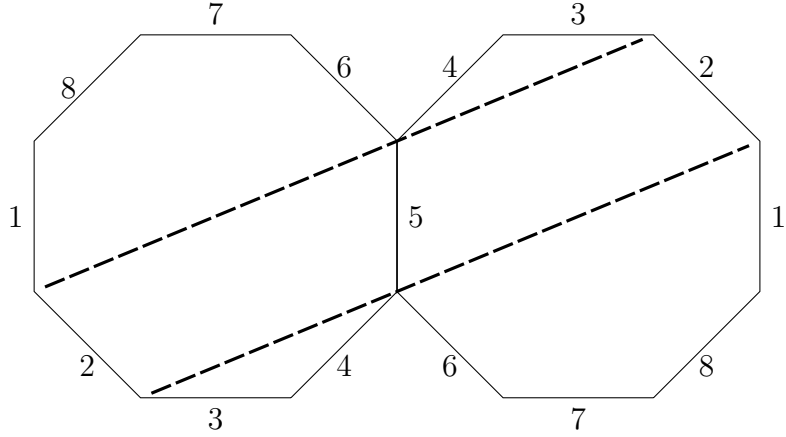
\begin{figure}[h!]
\begin{center}

\begin{tikzpicture}

 \tikzstyle{a line}=[very thick, dash pattern=on 8pt off 2pt];
\draw (0,0)   {}
        -- ++(45:1cm) node [below] {$4$}
        -- ++(45:1cm) 
                -- ++(90:1cm) node [right] {$5$}
        -- ++(90:1cm) 
        -- ++(135:1cm) node [above] {$6$}
              -- ++(135:1cm)
        -- ++(180:1cm) node [above] {$7$}
                -- ++(180:1cm) 
        -- ++(225:1cm) node [left] {$8$}
                -- ++(225:1cm) 
        -- ++(270:1cm) node [left] {$1$}
                -- ++(270:1cm) node (d1){}
       -- ++(315:1cm) node [below] {$2$}
              -- ++(315:1cm) node (b1){}
                 -- ++(360:1cm)  node [below] {$3$}
        -- (0,0);

\draw (4.82,0)   {}
        -- ++(45:1cm) node [below] {$8$}
        -- ++(45:1cm) 
                -- ++(90:1cm)  node [right] {$1$}
        -- ++(90:1cm) node(c2){}
        -- ++(135:1cm)  node [above] {$2$}
              -- ++(135:1cm) node (e2){}
        -- ++(180:1cm)  node [above] {$3$}
                -- ++(180:1cm) 
        -- ++(225:1cm)  node [above] {$4$}
                -- ++(225:1cm) 
        -- ++(270:1cm) 
                -- ++(270:1cm) 
       -- ++(315:1cm)  node [below] {$6$}
              -- ++(315:1cm) 
               -- ++(360:1cm)  node [below] {$7$}
        -- (4.82,0);

           \draw[a line] (b1) -- (c2);   \draw[a line] (d1) -- (e2);
        
\end{tikzpicture}
\caption{\label{fig:Veechoctagon}
Veech's double octagon.
}
\end{center}
\end{figure}

\subsection{Surfaces satisfying our conditions}
We now present some examples which cannot be reduced to skew products
over rotations, and to which our methods apply. 
An example of a $\Z$-cover which is a lattice surface is given at the
end of \cite{Zcovers}. 
The compact lattice surface $M$ is obtained from two octagons with pairs of sides
glued together (the gluings are different from the example of Veech in
\cite{Veech - alternative}). Hooper and Weiss proved that
one homology class $w_{0}$ ($(b) - (c)$ in Figure
\ref{fig:octagon}) induces a $\Z$-cover $\til M_{w_{0}}$ which is a
lattice surface. The direction of slope 1 has an infinite strip on
$\til M_{w_{0}}$ since the cylinder of slope 1 which intersects $(b)$
does not intersect $(c)$ in $M$. Thus a.e. direction on $\til
M_{w_0}$ is ergodic. 

\medskip

Many other examples can be constructed. We recall that Veech's original
examples \cite{Veech - alternative} of lattice surfaces are obtained
from two regular $n$-gons, where a 
side of one $n$-gon is glued to the opposite side of the other
$n$-gon. We will denote this translation surface by
$\mathrm{Reg}_{n}$ --- see figure \ref{fig:Veechoctagon} for the case $n =
8$. Assume that $n = 4m$, and denote by $1,2, \dots 4m$ the sides of
one of the $4m$-gons with counter clockwise orientation. Denote by $w$
the homology class of $1 + 3 + \dots + 4m-1$ and by $\mathrm{Reg}_{n,w}$ the
associated $\Z$-cover. We have $\hol(w) = 0$ since each segment appears
twice with opposite orientation. We recall that the Veech group of
$\mathrm{Reg}_{n}$  is generated by the horizontal Dehn twist and by the
rotation $r$ of angle $\pi/2m$. The horizontal Dehn twist lifts to
$\til{\mathrm{Reg}}_{n,w}$ since $w$ either intersects a horizontal cylinder once
with a positive orientation and once with a negative one or it does not
intersect it, see \cite[Proposition 24]{Zcovers}. The image of $w$ by
$r$ is 
the homology class $w*$  which is the sum of the segments $2, 4,
\dots, 4m$. Their union partitions $\mathrm{Reg}_n$ into two
connected components (the two $n$-gons), and $w = -w^*$, so by
\cite[Proposition 8]{Zcovers}, the rotation $r$ lifts to an affine
automorphism of $\til{ \mathrm{Reg}}_{n,w}$. In particular
$\til{\mathrm{Reg}}_{n,w}$ is a lattice surface.  
Moreover the direction of slope $\tan(\pi/n)$ has a strip since one
cylinder intersects the boundary of the $n$-gon along the sides $2,
2m+1$ (see figure \ref{fig:Veechoctagon}). 

A similar argument works for the surface $\mathrm{Reg}_{n= 2m+2}$. We
use the same class $w$ (sum of the odd-labelled segments), and in the
argument, exchange the roles of the
horizontal direction and the 
direction of slope  $\tan(\pi/n)$ (see figure
\ref{fig:Veechdecagon}). We leave the details to the reader.  

\ignore{
\begin{figure}[h!]
\begin{center}
\includegraphics[scale=0.7]{figure1_example.eps}
\caption{
An non-lattice parking garage with optimal dynamics.}
\label{parking}			
\end{center}
\end{figure}
}

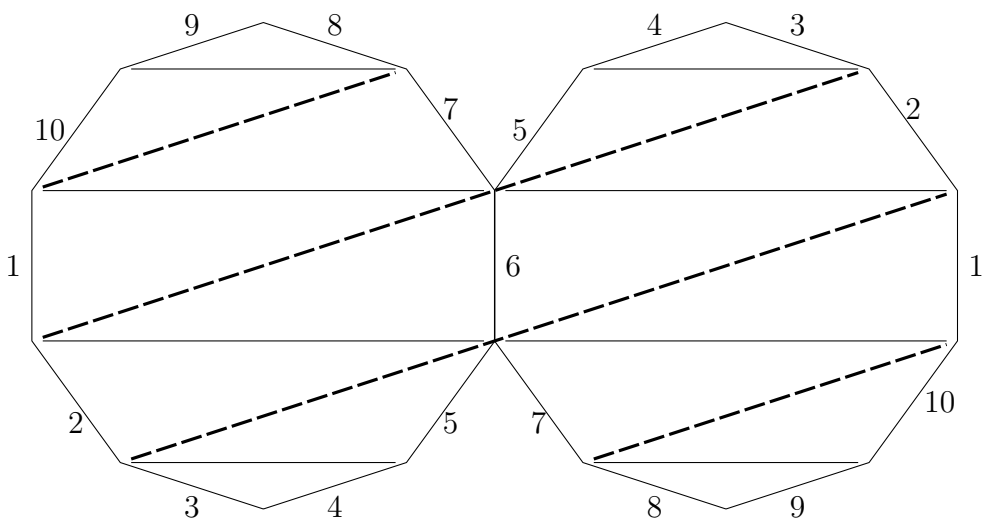
\begin{figure}[h!]
\begin{center}
\begin{tikzpicture}
 \tikzstyle{a line}=[very thick, dash pattern=on 8pt off 2pt];

 \tikzstyle{a line}=[very thick, dash pattern=on 8pt off 2pt];
\draw (0,0)   {}
 -- ++(18:1cm) node [below] {$4$}
        -- ++(18:1cm) node (a1){}
        -- ++(36+18:1cm) node [below] {$5$}
        -- ++(36+18:1cm) node (c1){}
                -- ++(72+18:1cm) 
        -- ++(72+18:1cm) node (e1){}
        -- ++(108+18:1cm) node [above] {$7$}
              -- ++(108+18:1cm) node (g1){}
        -- ++(144+18:1cm) node [above] {$8$}
                -- ++(144+18:1cm) 
        -- ++(180+18:1cm) node [above] {$9$}
                -- ++(180+18:1cm) node (h1){}
        -- ++(216+18:1cm) node [left] {$10$}
                -- ++(216+18:1cm) node (f1){}
       -- ++(252+18:1cm) node [left] {$1$}
              -- ++(252+18:1cm) node (d1){}
                   -- ++(288+18:1cm) node [below] {$2$}
              -- ++(288+18:1cm) node (b1){}
                   -- ++(324+18:1cm) node [below] {$3$}
        -- (0,0);
        
              \draw (a1)--(b1);    \draw (c1)--(d1);   
               \draw (e1)--(f1);   
 \draw (g1)--(h1);

\draw (6.15,0)   {}
 -- ++(18:1cm) node [below] {$9$}
        -- ++(18:1cm) node (a2){}
        -- ++(36+18:1cm) node [right] {$10$}
        -- ++(36+18:1cm) node (c2){}
                -- ++(72+18:1cm) node [right] {$1$}
        -- ++(72+18:1cm) node (e2){}
        -- ++(108+18:1cm) node [above] {$2$}
              -- ++(108+18:1cm) node (g2){}
        -- ++(144+18:1cm) node [above] {$3$}
                -- ++(144+18:1cm) 
        -- ++(180+18:1cm) node [above] {$4$}
                -- ++(180+18:1cm) node (h2){}
        -- ++(216+18:1cm) node [left] {$5$}
                -- ++(216+18:1cm) node (f2){}
       -- ++(252+18:1cm) node [right] {$6$}
              -- ++(252+18:1cm) node (d2){}
                   -- ++(288+18:1cm) node [below] {$7$}
              -- ++(288+18:1cm) node (b2){}
                   -- ++(324+18:1cm) node [below] {$8$} 
                   --        (6.15,0);
                   
   \draw (a2)--(b2);    \draw (c2)--(d2);   
               \draw (e2)--(f2);   
 \draw (g2)--(h2);

                    \draw[a line] (b1) -- (e2);
                        \draw[a line] (d1) -- (g2);
                          \draw[a line] (f1) -- (g1);
   \draw[a line] (b2) -- (c2);

\end{tikzpicture}

\caption{\label{fig:Veechdecagon}
Veech's double decagon.
}
\end{center}
\end{figure}

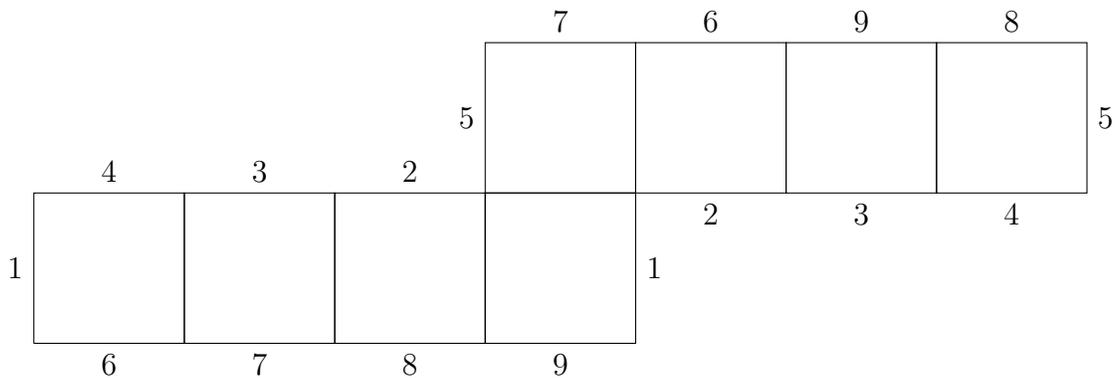
\begin{figure}[h!]
\begin{center}

\begin{tikzpicture}

\draw (0,0)   
         -- (1,0) node [below] {$6$}
        -- (2,0) --(2,1) 
               -- (2,2)  
        -- (1,2) node [above] {$4$}
        --(0,2) 
         --(0,1) node [left] {$1$}
          -- (0,0);
          
 \draw (2,0)   
         -- (3,0) node [below] {$7$}
        -- (4,0) --(4,1)
               -- (4,2)  
        -- (3,2) node [above] {$3$}
        --(2,2) 
         --(2,1)
          -- (2,0);
         
  \draw (4,0)   
         -- (5,0) node [below] {$8$}
        -- (6,0) --(6,1)
               -- (6,2)  
        -- (5,2) node [above] {$2$}
        --(4,2) 
         --(4,1)
          -- (4,0);      
          
  \draw (6,0)   
         -- (7,0) node [below] {$9$}
        -- (8,0) --(8,1) node [right]{$1$}
               -- (8,2)  
        -- (7,2) 
        --(6,2) 
         --(6,1) 
          -- (6,0);          
             
   \draw (6,2)   
         -- (7,2)  
        -- (8,2)  --(8,3)  
               -- (8,4)  
        -- (7,4) node [above] {$7$}
        --(6,4) 
         --(6,3)  node [left] {$5$}
          -- (6,2);     
          
     \draw (8,2)   
         -- (9,2) node [below] {$2$}
        -- (10,2) --(10,3)
               -- (10,4)  
        -- (9,4) node [above] {$6$}
        --(8,4) 
         --(8,3) 
          -- (8,2);           
          
           \draw (10,2)   
         -- (11,2) node [below] {$3$}
        -- (12,2) --(12,3)
               -- (12,4)  
        -- (11,4)  node [above] {$9$}
        --(10,4) 
         --(10,3)
          -- (10,2);          
          
               \draw (12,2)   
         -- (13,2) node [below] {$4$}
        -- (14,2) --(14,3) node [right] {$5$}
               -- (14,4)  
        -- (13,4) node [above] {$8$}
        --(12,4) 
         --(12,3) 
          -- (12,2);

\end{tikzpicture}

\caption{\label{fig:wollmichsau}
Wollmichsau.
}
\end{center}
\end{figure}
\subsection{An extension of Kesten's theorem}
Kesten \cite{Kesten} showed that if $\theta$ is an irrational
direction on the infinite staircase, then there is no $x$ for which 
$\{\overline{\phi}_tx: t \in \R\}$ is bounded. Adapting an argument of Petersen
\cite{Peterson}, we show the following: 

\begin{prop}\name{prop: Kesten}
Let $\til M \to M$ be a recurrent $\Z$-cover and let $\theta$ be a
direction which is ergodic on $\til M$. Then there
is no bounded trajectory on $\til M$.  
\end{prop}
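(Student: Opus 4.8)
The plan is to argue by contradiction: suppose $\theta$ is ergodic on $\til M$ but there exists $x$ whose forward (or full) trajectory $\{\ov{\phi}_t x\}$ stays in a bounded subset $B \subset \til M$. The strategy, following Petersen, is to use this bounded trajectory to manufacture an invariant set of intermediate measure, contradicting ergodicity. First I would recall the skew-product structure: by the discussion around \equ{eq: cocycle}, $\ov{\phi}_t$ on $\til M$ is measurably equivalent to the $\Z$-valued skew product $(\phi_t, \alpha)$ over the compact surface $M$, and a trajectory is bounded in $\til M$ precisely when its level coordinate $n + \alpha(p(x), t)$ stays bounded (since $M$ is compact, boundedness in $\til M$ is equivalent to the $\Z$-coordinate staying in a finite interval, plus $\ov{\phi}_t x$ not escaping to a "boundary at infinity" of a fixed level — but on a translation surface obtained from countably many copies of compact $M$, staying in finitely many levels already means staying in a compact set).

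The key step is the following: let $y = p(x) \in M$ and consider the function which records, for a generic point $z \in M$, whether the cocycle $\alpha(z, \cdot)$ stays bounded along the flow. Define $B_N = \{z \in M : |\alpha(z,t)| \le N \text{ for all } t \ge 0\}$ (working with the semiflow), so that $\bigcup_N B_N$ is exactly the set of $z$ admitting a forward-bounded lift. By hypothesis this union is nonempty (it contains $y$). The point is that $\bigcup_N B_N$ is $\phi_t$-invariant up to the cocycle shift — more precisely, the property of admitting \emph{some} bounded lift is $\phi_t$-invariant as a subset of $M$ — so by ergodicity of $\phi_t$ on $M$ it is either null or conull. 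Petersen's argument then shows it must in fact be null: if it were conull, one could use a recurrence/maximal-inequality argument (this is where recurrence of the $\Z$-cover, Proposition \ref{prop: Zcovers correspondence}(ii), enters) to show the cocycle would be a coboundary, i.e. $\alpha$ cohomologous to $0$ via a measurable transfer function $g: M \to \Z$; but then the skew product $\ov{\phi}_t$ would be measurably isomorphic to $\phi_t \times (\text{identity on } \Z)$, which is manifestly not ergodic on $\til M$ — contradicting the hypothesis that $\theta$ is ergodic. Hence $\bigcup_N B_N$ is null, so for a.e. $z$ no bounded lift exists; but we must still rule out the measure-zero exceptional set containing our particular $y$.

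To handle the exceptional point, I would upgrade the measure-theoretic statement to a topological one using minimality-type input, or alternatively run the argument directly with the orbit closure: given a bounded forward trajectory through $x$, its orbit closure $\cl{\{\ov{\phi}_t x : t \ge 0\}}$ is a compact $\ov{\phi}_t$-invariant set, carries an invariant probability measure $\nu$ by Krylov–Bogolyubov, and $\nu$ projects to an invariant probability measure on $M$, which by unique ergodicity considerations (or just ergodicity of $\phi_t$) must be Lebesgue; then $\nu$ is a probability measure on $\til M$ invariant under $\ov{\phi}_t$ and under the deck transformation $S$ only up to the projection — one shows $\nu$ cannot exist because pushing forward by powers of $S$ would give infinitely many mutually singular invariant measures all projecting to Lebesgue, contradicting that the ergodic Lebesgue-measure-class decomposition of the skew product is governed by the essential values (which, since $\theta$ is ergodic, are all of $\Z$).

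The main obstacle I anticipate is precisely this last point: passing from the almost-everywhere statement (no bounded lift for a.e. starting point, which is essentially Petersen's coboundary dichotomy) to the everywhere statement for the specific point $x$. The clean route is the orbit-closure/invariant-measure argument of the previous paragraph, whose crux is showing that an $\ov{\phi}_t$-invariant probability measure on $\til M$ projecting to Lebesgue on $M$ cannot exist when the skew product is ergodic — this follows because such a measure would be $\Z$-invariant (by averaging over the deck group, using that the full skew product is conservative since $\til M$ is recurrent, forcing the fibered measure to be counting measure, which is infinite) — so there is no finite $\ov{\phi}_t$-invariant measure at all, contradicting compactness of the orbit closure.
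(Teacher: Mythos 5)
Your first half (the almost-everywhere statement via the coboundary dichotomy) is essentially sound, but the step you yourself flag as the main obstacle --- passing from ``no bounded lift for a.e.\ starting point'' to ``no bounded lift for the specific point $x$'' --- is where the argument breaks down, and the mechanisms you propose for it do not work. The orbit-closure measure $\nu$ produced by Krylov--Bogolyubov projects to \emph{some} $\phi_t$-invariant probability measure on $M$, but there is no reason this should be Lebesgue: the hypothesis is only that Lebesgue measure is ergodic, not that $\phi_t$ is uniquely ergodic, and ergodic directions that are not uniquely ergodic do exist. Your fallback contradictions are also not valid as stated: one cannot ``average over the deck group'' a probability measure on $\til M$ (the group is infinite, so the average is not a probability measure); the measures $S^k_*\nu$ need not be mutually singular, and even if they were, the existence of infinitely many mutually singular invariant probability measures is not by itself a contradiction; and the essential-value theory governs measures in the Lebesgue measure class, to which $\nu$ need not belong. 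So the exceptional-point case is genuinely open in your write-up.

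The paper closes exactly this gap by a short topological argument that replaces all of the measure theory. Ergodicity of $\theta$ on the compact surface $M$ forces the base flow $\phi_t$ to be \emph{minimal}. Pick $\til x_2$ whose $\ov{\phi}_t$-orbit meets every positive-measure set, so that $\alpha(x_2,\cdot)$ takes every value in $\Z$; the bounded trajectory gives $|\alpha(x_1,t)|\le N$ for all $t>0$. For a fixed $T$ the function $x\mapsto \alpha(x,T)$ oscillates by at most a constant $K$ on a suitable neighborhood $U$ of $x_2$, and by minimality some $x_3=\phi_{t_0}(x_1)$ lies in $U$; the cocycle identity then gives $|\alpha(x_3,T)|\le 2N$ and hence $|\alpha(x_2,T)|\le 2N+K$ for every $T$, a contradiction. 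If you want to repair your proof, this minimality-plus-quasi-continuity comparison is the missing idea; the orbit-closure route would require an additional unique-ergodicity hypothesis that is not available.
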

\begin{proof} 
Suppose by contradiction that $\til x_1 \in \til M$ is a point with a
bounded trajectory. Since $\theta$ is an ergodic direction on $\til
M$, there are points in $\til M$ whose trajectory visits every subset of $\til M$ of
positive measure; suppose $\til x_2$ is such a point. 
Let $x_1, x_2$ denote the projections of $\til x_1, \til x_2$ in
$M$. Then the choice of $\til x_2$ implies that for the cocycle
$\alpha$ defined by \equ{eq: cocycle} we have 
\eq{eq: for a contradiction}{
\{\alpha(x_2, t): t>0\} = \Z.
}
Since the orbit of $\til x_1$ is bounded there is some $N$ such that
$|\alpha(x_1,t)| \leq N$ for all $t>0$. 
Let $\{\beta_{x_0, x}\}$ be the family of curves
described in \S \ref{section: essential}, let $\til w$ be a smooth
representative of the homology class $w$, and fix $T \in \R$.
There  
is a neighborhood $U$ of $x_2$ in $M$ such that 
$\phi_T|_U$ is an isometry, and such that the number of essential
intersections of the curve $s \mapsto \phi_sx$ with $\til w$, for $s$
between $0$ and $T$, is the same for all $x \in U$. This implies via
\equ{eq: cocycle} that 
for all $x_3 \in U$, $|\alpha(x_2, T) - \alpha(x_3, T)| \leq K$, where
$K$ is a constant depending only on the choice of paths
$\{\beta_{x_0,x}\}.$
Since $\theta$ is ergodic on $\til M$, it is also ergodic on $M$, and
since $M$ is compact, this implies that the straightline flow in
direction $\theta$ on $M$ is minimal. Thus there is $t_0>0$ such that
$T+t_0>0$ and 
$x_3 = \phi_{t_0}(x_1) \in U,$ 
and \equ{eq: cocycle identity} implies
$$
|\alpha(x_3, T)| = |\alpha(\phi_{t_0}(x_1), T)| =
|\alpha(x_1,T+t_0) -
\alpha(x_1, t_0)| \leq 2N.
$$
Therefore $|\alpha(x_2, T)| \leq 2N+K$, contradicting \equ{eq: for a contradiction}.
\end{proof} 

\subsection{An interesting non-example} The existence of a strip is
not automatic. On the Wollmilchsau $W$, depicted on figure 
\ref{fig:wollmichsau} there is a relative homology class $w_{1}$ of
zero holonomy such that {\it every} cylinder in  $W$ lifts to
$\til{W}_{w_{1}}$ as a union of cylinders. Indeed one can check that the
class $w_1$ which is the difference of the segments marked $2$ and $4$
on figure  
\ref{fig:wollmichsau} has this property. Thus, there is no infinite
strip on $\til{W}_{w_{1}}$ and our method does not apply to this
case. On the other hand, by
\cite[Proposition 30]{Zcovers}, $\til{W}_{w_1}$ is a lattice surface. 
In \cite{AHM} it is shown, using a different method, that every
irrational direction in $\til{W}_{w_1}$ is ergodic. 

In a certain sense, it is not easy to construct recurrent $\Z$-covers
without infinite strips. We use the following observation:
\begin{prop}
\name{prop: observation strips}
If the cylinder core curves of $M$ generate a finite-index subgroup of
$H_1(M; \Z)$ then for any
holonomy-free $w \in
H_1(M;\Z)$, the recurrent $\Z$-cover $\til M_w$ has infinite strips. 

\end{prop}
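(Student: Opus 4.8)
The plan is to reduce the statement to Proposition \ref{prop: having strips} and then to exploit nondegeneracy of the intersection form. Since $w$ is holonomy-free, $\hol(w) = 0$, so by Proposition \ref{prop: Zcovers correspondence}(ii) together with the accompanying definition, $\til M_w$ is a recurrent $\Z$-cover; hence Proposition \ref{prop: having strips} applies, and it suffices to exhibit a single cylinder $C \subset M$ whose core curve $\delta$ satisfies $i(w, [[\delta]]) \neq 0$.

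First I would record the one homological point that is needed. The natural map $H_1(M;\Z) \to H_1(M, P;\Z)$ is injective (as $P$ is finite), and I regard $w$ as living in $H_1(M, P; \Z)$ via this map. Since $w$ has a representing cycle avoiding $P$, the functional $i(w, \cdot)$ vanishes on the small loops around the points of $P$; these loops span the kernel of $H_1(M \sm P; \Z) \to H_1(M;\Z)$, so $i(w, c)$ depends only on the image $\bar c$ of $c$ in $H_1(M;\Z)$. In other words $i(w, \cdot)$ descends to the algebraic intersection pairing of $w$ against $H_1(M;\Z)$, i.e.\ the usual intersection form of the closed oriented surface $M$, which is unimodular and in particular nondegenerate. (Alternatively one may just quote the nondegeneracy of $i$ recalled in \S\ref{section:Zcover}.)

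Then the argument is short. A genuine $\Z$-cover requires $w \neq 0$ in $H_1(M;\Z)$, so by nondegeneracy there is $c \in H_1(M;\Z)$ with $i(w, c) \neq 0$. By hypothesis the classes of cylinder core curves generate a subgroup $\Lambda \le H_1(M;\Z)$ of finite index, so $Nc \in \Lambda$ for some integer $N \ge 1$ and $i(w, Nc) = N\, i(w,c) \neq 0$. Thus $w$ does not annihilate $\Lambda$; as $\Lambda$ is generated by core curve classes and $i(w,\cdot)$ is linear, this forces $i(w, [[\delta]]) \neq 0$ for the core curve $\delta$ of some cylinder $C$, and Proposition \ref{prop: having strips} produces an infinite strip in $\til M_w$.

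The proof is genuinely short, so there is no serious obstacle; the only place calling for care is the second paragraph, where one must correctly identify $i(w, \cdot)$ (for absolute $w$) with the nondegenerate intersection form on the closed surface $M$, so that $w \neq 0$ can be played against the finite-index hypothesis.
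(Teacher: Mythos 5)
Your proof is correct and follows exactly the route the paper intends: the paper's entire proof is the single sentence ``This is immediate from the non-degeneracy of the intersection pairing,'' and your write-up simply makes explicit the reduction to Proposition \ref{prop: having strips} and the finite-index argument that the authors left implicit. The care you take in the second paragraph, identifying $i(w,\cdot)$ for an absolute class $w$ with the nondegenerate intersection form on the closed surface, is a reasonable elaboration of what the paper takes for granted.
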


\begin{proof}
This is immediate from the non-degeneracy of the intersection
pairing. 
\end{proof}
The condition that the cylinder core curves generate $H_1(M;\Z)$ was
studied in \cite{Monteil}, where it is shown that it holds for almost
every surface $M$. 

\ignore{
\appendix
\section{Closed invariant sets have smaller dimension}

In this appendix we prove statement (*) which was stated in the proof of Proposition
\ref{prop: a.e. and hd}. In fact we prove a slightly more general result:
\begin{thm}\name{thm: appendix}
Suppose $G$ is a semisimple Lie group of real rank 1, $\Gamma$ is a
non-uniform irreducible lattice, and $\{g_t\}$ is an $\Ad$-semisimple 
one-parameter subgroup of $G$. Then the Hausdorff dimension of any proper closed $\{g_t\}$
invariant subset of $G/\Gamma$ is less than the dimension of $G$. 
\end{thm}
We remark that in \cite[Prop. 8.5]{margulis}, using ideas of M. Einsiedler and
E. Lindenstrauss, a similar statement was proved in which there was no
restriction on $G$, but $\Omega$ was assumed to be compact. It would be
interesting to remove the hypothesis that $G$ has real rank 1, in
Theorem \ref{thm: appendix}. 

\begin{proof}
We first remark that there is a $C \subset G/\Gamma$ that intersects
every $\{g_t\}$-orbit, and such that for any $\{g_t\}$-invariant
measure $\nu$, the orbit of $\nu$-almost every $x$ intersects $C$
infinitely many times. For the case $G=\SL(2,\R)$
this follows from the discussion of cusps given in \S
\ref{subsection: cusps} \combarak{give a reference for the general
  case.}

Let $\Omega$ be a closed $\{g_t\}$-invariant set with dimension equal
to $G$. Then, adapting the Ledrappier-Young formula, one can prove
that the topological entropy of the restriction of $g_1$ to $\Omega$
equals $h_{\max}$, the topological entropy of $g_1$ on
$G/\Gamma$. \combarak{explain this.} 

The variational principle can be adapted to this case using the
property of $C$ described above. \combarak{explain this, perhaps this
  is the main point?} So there is a sequence of measures
$\mu_n$ supported on $\Omega$ which are $g_1$-invariant, such that
$h_{\mu_n}(g_1) \to h_{\max}$. By the theorem of Einsiedler, Kadyrov,
and Pohl, there is a convergent subsequence, and the limit is a
probability measure, denote it by $\mu$. By upper semicontinuity of entropy (refer to
Einsiedler-Lindenstrauss), $h_{\mu}(g_1) = h_{\max}$. By uniqueness of
the measure of maximal entropy (refer
Einsiedler-Lindenstrauss-Michel-Venkatesh), $\mu$ is the $G$-invariant
measure on $G/\Gamma$. But $\mu$ is supported on $\Omega.$ Thus
$\Omega = G/\Gamma$. 

Removing from $C$ the set of points whose forward orbits are
divergent, and compactifying $C$ again (just like the procedure you perform to
construct a space on which an interval exchange is a continuous map),
we get a nice compact space, let us still call it $C$, with a return
map to $C$ along geodesics, which we will denote by $T: C\to C$. So
$C$ is a Poincar\'e section for the 
$\{g_t\}$-action, and the $\{g_t\}$-action is the flow under a roof
function associated with $T$, and the geodesic flow
arises from this compact set as a flow under a function. We claim
there is a 1-1 correspondence between ergodic invariant measures 
for $\{g_t\}$ on $G/\Gamma$ and $T$-invariant measures on $C$. Under
this correspondence one can relate the entropies of the corresponding
measures, in particular there is a unique measure of maximal entropy
for $T$ on $C$ because the same is true for $\{g_t\}$ on $G/\Gamma$. 

Suppose that $K$ is a $\{g_t\}$-invariant set of full dimension. We
will show that it must equal $G/\Gamma$. Let $C'$ be the
corresponding set in $C$ corresponding to orbits which are in
$K$. Then 
$\dim C' =  C$ because $\dim K = \dim G/\Gamma$. Now we have reduced
our problem to a compact setup. 
There is an argument (lemma 7.6 in my paper
with Kleinbock) which shows that the topological entropy of the action of
$T$ on $C'$ is as large as the entropy $h$ for the measure of maximal entropy on
$C$. By the variational principle and upper semicontinuity of entropy, you
can put on $C'$ a measure with entropy $h$. By uniqueness of the measure of
maximal entropy, and since the measure of maximal entropy is of full
support, $C'=C$, which implies that $K=G/\Gamma$. 

\end{proof}
}
\medskip 

{ \bf Acknowledgments:} 
This research was supported by the Israel-France mathematics
program 07 MATH  F 5 `Geometry and Dynamics of flat surfaces', by
Israel Science Foundation grant 190/08,  by the ANR Teichm\"uller `projet
blanc' ANR-06-BLAN-0038 and by ANR `projet blanc' GeoDyM. 

We thank Manfred Eisiedler, Pat Hooper and Thierry Monteil for enlightening discussions,
David Ralston for telling us about Kesten's theorem, 
and Anton Zorich for telling us about Panov's work. \\


\begin{thebibliography}{DRV2}
\bibitem[ANSS]{ANSS} J. Aaronson, H. Nakada, O. Sarig and R. Solomyak,
{\em Invariant measures and asymptotics for some skew products},
Isr. J. Math. {\bf 128} (2002), 93--134.

\bibitem[AvHuMa]{AHM} A. Avila, P. Hubert and C. Mattheus, {\em in preparation.}

\bibitem[Co]{Co} J.P. Conze, {\em Equir\'epartition et ergodicit\'e de
transformations cylindriques}, s\'eminaire de probabilit\'e de Rennes
(1976), 1--21. 

\bibitem[CoFr]{Conze-new} J.-P. Conze and K. Fraczek, {\em Cocycles
over interval exchange transformations and multivalued Hamiltonian
flows}, preprint (2010) {\tt http://arxiv.org/abs/1003.1808}  


\bibitem[Ei]{Einsiedler} M. Einsiedler, personal communication. 

\bibitem[EiKaPo]{EKP} M. Einsiedler, S. Kadyrov and A. Pohl, {\em
  Escape of mass and entropy for diagonal flows in real rank one situations},
  preprint (2011) {\tt http://arxiv.org/abs/1110.0910} 

\bibitem[FrUl]{Franczek-Ulcigrai} K. Fraczek and C. Ulcigrai, {\em
  Non-ergodic $\Z$-periodic billiards and infinite translation
  surfaces}, preprint (2011), {\tt 
http://arxiv.org/abs/1109.4584}


\bibitem[Ho1]{Ho} P. Hooper, {\em Dynamics on an infinite staircase
with the lattice 
 property}, preprint (2008) {\tt
http://arxiv.org/abs/0802.0189}

\bibitem[Ho2]{Ho-classification} P. Hooper, {\em The invariant
measures of some infinite interval exchange maps}, preprint (2010)
{\tt
http://arxiv.org/abs/1005.1902}

\bibitem[HoHuWe]{HuWe} W. P. Hooper, P. Hubert and B. Weiss, {\em Dynamics on the
infinite staircase surface}, preprint 
 (2008) {\tt http://www.math.bgu.ac.il/~barakw/staircase.pdf}

\bibitem[HoWe]{Zcovers} W. P. Hooper and B. Weiss, {\em Generalized
staircases: recurrence and symmetry} (2009) to appear in
Ann. Inst. Fourier. 

 
\bibitem[HuSc]{HuGabi} P. Hubert and G. Schmithuesen, {\em Infinite translation surfaces with
 infinitely generated Veech group}, J. Mod. Dyn. 4 (2010), no. 4, 715--732. 



\bibitem[Ka]{Ka} S. Katok, {\bf Fuchsian groups}, Chicago Lectures in
Mathematics. University of Chicago Press (1992). 

\bibitem[Ke]{Kesten} H. Kesten, {\em On a conjecture of Erd\H{o}s and
  Sz\"usz related to uniform distribution mod 1, }
Acta Arith. {\bf 12} 1966/1967 193--212. 

\bibitem[KlWe]{margulis} D. Kleinbock and B. Weiss, {\em Modified
  Schmidt games and a conjecture of Margulis}, preprint (2011). 

\bibitem[KS]{KS} D. K\"onig and A. Sz\"ucs, {\em Mouvement d'un point
abandonn\'e \`a l'interieur d'un cube}, Rend. del circulo matematico
di Palermo {\bf 36} (1913) 79--90. 

\bibitem[Ma]{Masur-Duke} H. Masur, {\em Hausdorff dimension of the set
of nonergodic foliations of a quadratic differential}, Duke
Math. J. {\bf 66} (1992) 387--442.

\bibitem[MaTa]{MaTa} H. Masur, S. Tabachnikov, {\em Rational billiards
and flat structures}, in {\bf Handbook of dynamical systems}, Vol. 1A,
North-Holland, Amsterdam (2002), pp.~1015--1089. 


\bibitem[Mo]{Monteil} T. Monteil, {\em A homological condition for a
dynamical and illuminatory classification of torus branched
coverings,} preprint (2006) {\tt http://arxiv.org/abs/math/0603352 }


\bibitem[Pa]{Pa} D. Panov, {\em Foliations with unbounded deviation on
$\mathbb{T}^2$}, J. Mod. Dyn.  {\bf 3}  (2009),  no. 4, 589--594. 

\bibitem[P]{Patterson} S. J. Patterson, {\em Diophantine approximation
  in Fuchsian groups}, Phil. Trans. Roy. Soc. Lond.
{\bf 282} (1976) 527--563.

\bibitem[Pe]{Peterson} K. Petersen, {\em On a series of cosecants
  related to a problem in ergodic theory, }  Compositio Math.  {\bf 26}  (1973) 313--317.


\bibitem[Ra]{Ralston} D. Ralston, {\em $1/2$-heavy sequences driven by rotations,} (2011), preprint {\tt http://arxiv.org/abs/1106.0577}. 
\bibitem[Sch]{Schmidt} K. Schmidt, {\bf Cocycles of
ergodic transformation groups},
MacMillan (India) (1977), available at {\tt
http://www.mat.univie.ac.at/\~{}kschmidt}


\bibitem[V]
{Veech - alternative} W. A. Veech, {\em
Teichm\"uller curves in moduli space, Eisenstein series and an
application to triangular billiards}, 
Invent. Math. {\bf 97} (1989), no. 3, 553--583.

\bibitem[Vi]{Viana} M. Viana, {\em 
Dynamics of interval exchange maps and Teichm\"uller flows}
Lecture notes of graduate courses taught at IMPA in 2005 and 2007.
Working preliminary manuscript. 
{\tt http://w3.impa.br/~viana/out/ietf.pdf}


\bibitem[Zo]{Zo} A. Zorich, {\em Flat surfaces},  Frontiers in number
theory, physics, and geometry I, Springer, Berlin, (2006), pp.~
437--583. 


\end{thebibliography}
\end{document}